\documentclass[runningheads]{llncs}
\usepackage[T1]{fontenc}
\usepackage{graphicx}
\usepackage[title]{appendix}
\usepackage{amsmath}
\usepackage{amsfonts,amssymb}
\usepackage[electronic]{ifsym}
\usepackage{authblk}
\usepackage{hyperref}
\usepackage[nameinlink, capitalize]{cleveref}
\usepackage[shortlabels]{enumitem}
\usepackage{esvect}

\usepackage{tikz}
\usetikzlibrary{automata,positioning,arrows.meta}

\newcommand{\N}{\mathbb{N}}

\newcommand{\BA}{\mathsf{BA}}
\newcommand{\pr}{\mathrm{pr}}
\newcommand{\forallP}{\forall^{P_k}}
\newcommand{\existsP}{\exists^{P_k}}

\newcommand{\T}{\mathsf{T}}
\newcommand{\one}{\underline{1}}

\begin{document}
\title{A natural axiomatization of B{\"u}chi Arithmetic}
%
%
\author{Konstantin Kovalyov\inst{1}\orcidID{0009-0004-6621-9184}}

\authorrunning{K. Kovalyov}

\institute{Steklov Mathematical Institute of Russian Academy of Sciences, Moscow, Russia\\
\email{kovalyov.ka@mi-ras.ru}}

\maketitle

\begin{abstract}
    We investigate Büchi Arithmetic $\BA_k$ --- the elementary theory of the natural numbers equipped with addition and the function mapping a number $x$ to the greatest power of $k$ dividing $x$. $\BA_k$ is known to be decidable and to enjoy a few important properties, in particular, a first-order structure is automatic iff it is interpretable in $\BA_k$. We propose a natural axiomatization of this theory based on a comprehension schema restricted to bounded formulas, interpreting natural numbers as finite (multi)sets of powers of $k$ via their base-$k$ expansions. The completeness proof for this axiomatization proceeds through a formalization of the Büchi–Bruyère Theorem on the equivalence of definability in Büchi Arithmetic and recognizability by finite automata.
\end{abstract}

\section{Introduction}\label{section_introduction}

Base-$k$ Büchi arithmetic, denoted $\mathsf{BA}_k$, is the elementary theory of the structure $(\mathbb{N}, 0, S, +, V_k, \leq)$, where $k \geq 2$ is a fixed integer. For $x > 0$, $V_k(x)$ denotes the largest power of $k$ dividing $x$, and $V_k(0) = 0$.  It is useful to think about $V_k(x)$ as the position of the least significant non-zero digit of the base-$k$ expansion of $x$. The structure $(\mathbb{N}, 0, S, +, V_k, \leq)$ is \emph{automatic} and, moreover, it interprets every automatic structure (see \cite[Theorem~4.5]{Blumensath2004}). The theory $\mathsf{BA}_k$ is well known for the following fundamental result.

\begin{theorem}[{\cite{Buchi,bruyare}}]\label[theorem]{theorem_Buchi_Bruyere}
    A set $A \subseteq \mathbb{N}^m$ is definable in $\mathsf{BA}_k$ if and only if the set of base-$k$ expansions of the elements of $A$ is recognizable by a finite automaton. Moreover, the construction of a finite automaton from a defining formula is effective; consequently, $\mathsf{BA}_k$ is decidable.
\end{theorem}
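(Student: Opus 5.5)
We prove the two directions separately, coding tuples $(x_1,\dots,x_m)$ as words over the alphabet $\{0,\dots,k-1\}^m$. We pad the base-$k$ expansions with leading zeros to a common length and read them least significant digit first. This is harmless, since regular languages are closed under reversal and under adding or stripping padding zeros.

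\emph{Definable $\Rightarrow$ recognizable.} We argue by induction on formulas, after first reducing to a relational signature. Each term is unnested into atomic formulas of the forms $x=0$, $y=S(x)$, $x+y=z$, $V_k(x)=y$ and $x\le y$, using auxiliary existentially quantified variables. For each atomic relation we give an explicit automaton.
\begin{itemize}
\item For addition, the automaton reads the three digit tracks and keeps the carry in $\{0,1\}$ as its state.
\item For $\le$, it remembers the comparison outcome decided so far; since we read from the least significant end, the last position where the digits differ decides.
\item For $V_k(x)=y$, it checks that $y$ has exactly one digit equal to $1$, that this digit sits at the position of the first nonzero digit of $x$, and that $x=0$ iff $y=0$.
\item The cases of $0$ and $S$ are equally easy.
\end{itemize}
For the induction steps, boolean connectives are handled by product automata and complementation via determinization. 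The quantifier $\exists x_i$ is handled by projecting away the $i$-th track, which yields a nondeterministic automaton. Projection needs one subtle step: a witness may be longer than the other components. We therefore saturate acceptance under appending padding letters $(0,\dots,0)$, making the accepting set closed under suffixes of zero letters before projecting. Every step is an explicit construction, so the construction is effective. Decidability then follows: a sentence yields an automaton over the empty alphabet (tuples of length $0$), and we only need to check it for emptiness.

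\emph{Recognizable $\Rightarrow$ definable.} This is the main obstacle. Fix a deterministic automaton $\mathcal A$ with states $q_1,\dots,q_n$ reading the input least significant digit first. First we define the predicate $D_j(x,p,d)$, meaning ``$p$ is a power of $k$ and the digit of $x$ at position $p$ equals $d$.'' We get powers of $k$ from $P_k(p):\iff V_k(p)=p\wedge p\neq 0$. For the digit itself we write $D_j(x,p,d)$ as $\exists u\,\exists w\,(x=u+d\cdot p+w\wedge u<p\wedge (w=0\vee p<V_k(w)))$, where $d\cdot p$ is an iterated sum, which is fine since $d<k$ is fixed. To encode a run, we use $n$ auxiliary numbers $y_1,\dots,y_n$, with $y_i$ having digit $1$ exactly at the positions $p$ where the run is in state $q_i$ before reading position $p$. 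Then $A$ is defined by the formula $\exists y_1\dots\exists y_n$ asserting the following:
\begin{itemize}
\item All digits of the $y_i$ are $0$ or $1$, and at each position below a bound $\ell$ (a power of $k$ exceeding every $x_j$) exactly one $y_i$ has digit $1$.
\item The lowest position carries the initial state.
\item For every power $p<\ell$ whose successor position $kp$ is also $<\ell$, if $y_i$ has a $1$ at $p$ and the digits of the $x_j$ at $p$ are $\vec d$, then the state at $kp$ is $\delta(q_i,\vec d)$. Here $kp$ is the $k$-fold sum $p+\dots+p$.
\item The state after the last position is accepting. We handle this by letting the run extend through position $\ell$ itself, so that $y_i$ may have its digit at $\ell$, and requiring the state marked at $\ell$ to be final.
\end{itemize}
These are finitely many first-order conditions quantifying over powers $p$, because the alphabet and the state set are finite. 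The care needed is in the bookkeeping. First, ``at position $p$'' must be expressed uniformly. Second, the choice of length $\ell$ must respect the padding convention, which we take to be that $\mathcal A$'s language is closed under trailing padding zeros.
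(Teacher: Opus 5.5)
Your argument is correct and is essentially the proof the paper formalizes in \cref{section_axiomatization}. Your run-encoding formula is the paper's $W_{A,q}(\vec x,\ell)$, which only adds the bound $w_q<k\ell$ on the witnesses to make it $\Delta_0$, and your atomic automata with product, DFA complementation and determinized projection are the steps of \cref{theorem_Buchi_formalized}.

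Two points about the projection step. First, the padding letters in your saturation must be $\vec 0$ on the surviving tracks but arbitrary on the projected track: saturating under the all-zero letter of $\Sigma_k^{m+1}$ does nothing, since $[\psi]_k$ is already closed under stripping trailing zeros. Second, this step is genuinely needed, and the paper's existential case omits it. The paper takes just $\det(\pr(A))$ read up to $g(\vec x)$, which only finds witnesses $y<g(\vec x)$; for example, it rejects $x=0$ for $\exists y\,(S(x)=y)$.
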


Büchi Arithmetic has been the subject of extensive investigation, particularly in connection with problems of definability, complexity, and interpretability. A central theme in this area is the relationship between Büchi Arithmetic and Presburger Arithmetic, which also highlights the close link with recognizability by finite automata. This connection is made precise by a classical result due to A.~Cobham and A.~Semenov.

\begin{theorem}[{\cite{Cobham_1969,Semenov_1977}}]
    For $A \subseteq \N^m$, the following are equivalent:
    \begin{enumerate}[start=1,label={\rm (\roman*)}]
        \item $A$ is definable in Presburger Arithmetic;
        \item $A$ is definable in $\BA_k$ for all $k \geq 2$;
        \item $A$ is definable in $\BA_k$ and $\BA_l$ for some multiplicatively independent $k, l \geq 2$.
    \end{enumerate}
\end{theorem}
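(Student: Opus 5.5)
The implications (i)$\Rightarrow$(ii) and (ii)$\Rightarrow$(iii) are the easy ones, and I would dispose of them first.

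For (i)$\Rightarrow$(ii), note that every Presburger formula is already a formula in the language of $\BA_k$, since $V_k$ is simply not used. So every Presburger-definable set is $\BA_k$-definable for every $k \geq 2$. For (ii)$\Rightarrow$(iii), take $k=2$ and $l=3$. They are multiplicatively independent, because $2^a = 3^b$ forces $a=b=0$ by unique factorization.

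The substance is (iii)$\Rightarrow$(i). By \cref{theorem_Buchi_Bruyere}, the hypothesis becomes: the base-$k$ and base-$l$ expansions of $A$ are both recognized by finite automata, with $k$ and $l$ multiplicatively independent. The goal is then to show that $A$ is a finite union of linear sets, i.e.\ semilinear, which by Ginsburg--Spanier is the same as Presburger-definable.

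I would treat $m=1$ first, following Cobham's original route.

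\textbf{Step 1: syndeticity.} Show that an infinite set $A$ that is both $k$- and $l$-recognizable is syndetic, i.e.\ has bounded gaps. The key input is that $\log k/\log l$ is irrational, so $\{k^a/l^b\}$ is dense in $\mathbb{R}_{>0}$. Together with a pumping argument in both bases, this lets one place elements of $A$ with prescribed leading digits at every scale. A large gap in $A$ could then be pumped to contradict finiteness of the automata.

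\textbf{Step 2: ultimate periodicity.} Using syndeticity, show that the sequence of translates $(A - n)\cap[0,N]$ takes only finitely many shapes and eventually repeats. This gives that $A$ is ultimately periodic.

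\textbf{Step 3: back to Presburger.} An ultimately periodic set is Presburger-definable via congruences and finitely many exceptions.

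For $m>1$ I would follow Semenov. Induct on dimension, using that sections and projections of recognizable sets remain recognizable; in $\BA$ terms these are simply definable operations. Then characterize semilinear sets locally: every section and every ``local direction'' must be semilinear. The $k$- and $l$-recognizability of these local pieces reduces the claim to lower-dimensional instances and to the one-dimensional result.

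The main obstacle is Step 1, the syndeticity argument, which is where multiplicative independence is really used. I would attempt it through Kronecker's density lemma combined with the combinatorics of automata on digit strings. Failing a clean proof, I would cite the modern short proofs of Cobham's theorem, e.g.\ via Hansel's argument or via the logical approach of Michaux--Villemaire: show that a non-Presburger $\BA_k$-definable set defines, in $(\N,+,A)$, a function growing like $k^x$, and that this is incompatible with $\BA_l$-definability.

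For the multidimensional case I would rely on Semenov's theorem, or on the Muchnik/Michaux--Villemaire definability criterion. That criterion states that $A$ is Presburger iff every set definable from $A$ in $(\N,+,A)$ is locally periodic, and it transfers the one-dimensional argument without new combinatorics.
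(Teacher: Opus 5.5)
The paper gives no proof of this theorem. It is quoted as background from Cobham and Semenov and never used later. So your correct arguments for (i)$\Rightarrow$(ii) and (ii)$\Rightarrow$(iii), plus your fallback on the literature, match the paper's level of justification.

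Your self-contained sketch for (iii)$\Rightarrow$(i), however, has a genuine gap, and it is in Step~2, not Step~1. For fixed $N$ the patterns $(A-n)\cap[0,N]$ trivially take at most $2^{N+1}$ values. Saying that they eventually repeat periodically in $n$ is just ultimate periodicity restated, so the step is circular.

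Syndeticity cannot close the gap. Take the Thue--Morse set of $n$ with odd binary digit sum. It is $2$-recognizable and contains exactly one of $2j,2j+1$ for every $j$. Every finite block of its characteristic word recurs with bounded gaps, yet it is not ultimately periodic. After Step~1 you therefore need a second genuine use of both recognizability hypotheses. Roughly, in Hansel's proof the syndeticity lemma is applied again to auxiliary sets built from $A$ that remain $k$- and $l$-recognizable (occurrence sets of special factors of the characteristic word). This is played off against the self-similar block structure that $k$-recognizability imposes at every scale $k^j$.

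In the Michaux--Villemaire route you also mention, Step~2 disappears. There one shows that if $A\subseteq\N^m$ is $k$-recognizable but not Presburger-definable, then $(\N,+,A)$ defines an infinite subset of $\N$ with unbounded gaps. That subset is again $k$- and $l$-recognizable, which contradicts Step~1. This covers all $m$ at once.

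That matters because your induction on dimension is not self-sufficient either: sections and projections alone do not control Presburger-definability. For example, $L=\{(x,y) : x\le y\le x+V_2(x)\}$ has only finite one-dimensional sections, and both its projections equal $\N$. Yet it is not Presburger-definable: $z=V_2(x)$ holds iff $(x,x+z)\in L$ and $(x,x+z+1)\notin L$, so $L$ would make the set of powers of $2$ Presburger-definable. Hence for $m>1$ everything rests on the unspecified ``local'' criterion, that is, on the theorems of Semenov or Muchnik that you end up citing.
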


This result has been substantially extended in subsequent work, most notably in \cite{Bes_1997} and, more recently, in \cite{Hieronymi_Schulz_2022}. Namely, for multiplicatively independent $k, l \geq 2$, if $X$ and $Y$ are not definable in Presburger Arithmetic, $X$ is definable in $\BA_k$, and $Y$ is definable in $\BA_l$, then the theory of the expansion of Presburger Arithmetic with the predicates for $X$ and $Y$ is undecidable.

From the point of view of computational complexity, $\BA_k$ has a complicated behavior as the following results describe. 

\begin{theorem}[follows from \cite{Meyer_1975}]
    $\BA_k$ is not Kalmar elementary. That is, $\BA_k$ cannot be decided in $$\exp_2(\exp_2( \dots \exp_2(n) \dots ))$$ steps for any fixed number of nested exponentiations (here $\exp_2(n) = 2^n$). 
\end{theorem}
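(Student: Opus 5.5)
Following the attribution to Meyer, I would obtain the statement by reduction from the weak monadic second-order theory of one successor (WS1S). Meyer showed that WS1S, equivalently the first-order theory of finite words under automatic relations, has no elementary decision procedure. Concretely, deciding WS1S sentences of length $n$ requires time exceeding any fixed tower of exponentials in $n$. The plan is to give a polynomial-time (indeed linear-size) translation of WS1S sentences into $\BA_k$ sentences that preserves truth. Any elementary decision procedure for $\BA_k$ would then yield one for WS1S, which is a contradiction.

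The translation encodes a finite set $X \subseteq \N$ as the natural number $x = \sum_{i \in X} k^i$, i.e.\ a number whose base-$k$ digits lie in $\{0,1\}$. First-order (position) variables $i$ are represented by powers $k^i$. The following are then definable in $\BA_k$ by formulas of fixed size:
\begin{itemize}
\item $\mathrm{Pow}(p)$ is given by $V_k(p)=p \wedge p \neq 0$.
\item The successor of positions $p \mapsto kp$ is the term $p+\dots+p$ with $k$ summands, which has constant size since $k$ is fixed.
\item Order of positions is the restriction of $\leq$.
\item Membership $i \in X$, i.e.\ the predicate ``the digit of $x$ at the position $p$ is $1$'', is handled below.
\end{itemize}
With these, each WS1S formula translates compositionally. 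Set quantifiers become quantifiers over $x$ relativized to ``all digits are $0$ or $1$''. This relativization, as well as the digit predicate, must have fixed size.

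The key step, and the main obstacle, is expressing digit extraction with a formula of constant size. I would use the standard Büchi trick: $x$ has digit $1$ at position $p$ iff there exist $y,z$ such that $x = y + p + z$, where $y < p$ and $z$ is either $0$ or satisfies $V_k(z) > p$. The condition $V_k(z)>p$ is expressed as $V_k(z) \neq 0$ together with $V_k(z)$ being at least $kp$. ``All digits in $\{0,1\}$'' is expressed as: for every power $p$, the unique decomposition $x = y + d\cdot p + z$ with $y<p$ and $z$ divisible by $kp$ has $d\in\{0,1\}$. This is a fixed-size formula since $k$ is fixed. Because each gadget has constant size, the translation has length $O(n)$. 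It is computable in polynomial time, and it preserves truth by induction on formulas.

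Finally, suppose $\BA_k$ were decidable within $\exp_2^{(m)}(n)$ steps for some fixed $m$. Composing that procedure with the linear translation decides WS1S in $\exp_2^{(m)}(cn)$ steps. This is bounded by $\exp_2^{(m+1)}(n)$ for large $n$, contradicting Meyer's nonelementary lower bound, which holds for every fixed tower height.
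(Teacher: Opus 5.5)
Your argument is correct and is the standard reduction behind the paper's one-line attribution to Meyer; the paper gives no proof of its own. You interpret WS1S in $\BA_k$ with linear blow-up by encoding $X \mapsto \sum_{i \in X} k^i$, taking positions to be powers of $k$, and expressing membership by the digit formula (the paper's $(x)_d = a$ with $a = 1$). One detail needs tightening: first flatten WS1S terms so that successor occurs only in atoms $\mathrm{Succ}(i,j)$, translated as $q = p + \dots + p$ with $p, q$ variables, because compositionally replacing a nested term $S^j(i)$ by $k$-fold sums gives size $k^j$ rather than $O(j)$.
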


Note that this bound is, in a sense, sharp: for a formula of length $n$ with $m$ quantifier alternations, the method from \cref{theorem_Buchi_Bruyere} runs in $$\underbrace{\exp_2(\exp_2( \dots \exp_2(}_{m \text{ times}} \mathrm{poly}(n)) \dots ))$$ steps. Despite this extreme worst-case complexity, certain fragments of $\BA_k$ admit more feasible decision procedures.

\begin{theorem}[\cite{Guepin_Haase_Worell_2019}]
    The existential fragment of $\BA_k$ is $\mathsf{NP}$-complete.
\end{theorem}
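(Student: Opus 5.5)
Membership in $\mathsf{NP}$ and $\mathsf{NP}$-hardness are handled separately, and nearly all of the work is in the upper bound.

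\emph{Hardness.} Existential Presburger arithmetic is already $\mathsf{NP}$-hard. For instance, integer programming feasibility, or subset sum written as $\exists x_1\dots x_n\,(\bigwedge_i x_i\le 1 \wedge \sum a_i x_i = b)$ with the constants $a_i$ and $b$ given in binary, is $\mathsf{NP}$-hard. Constants in binary can be encoded by existentially quantified variables of polynomial size using doubling ($y_{j+1}=y_j+y_j$), so this fragment embeds into the existential fragment of $\BA_k$ by a polynomial reduction.

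\emph{Upper bound.} First, push negations through to atoms. The negated atoms need care:
\begin{itemize}
\item $\neg(V_k(x)=y)$ becomes an existential statement saying that either $y$ is not a power of $k$ dividing $x$, or some larger power $k\cdot y$ divides $x$.
\item Divisibility by a power $z$ of $k$ is itself existential: $\exists q\,(x = q\cdot z)$. Multiplication by a variable is not available, so instead I would use the characterization ``$z$ divides $x$ iff $V_k(x)\ge z$ or $x=0$''. This is expressible directly, since $V_k$ is in the signature.
\end{itemize}
After this, guess a disjunct nondeterministically. The problem is now reduced to deciding satisfiability of a conjunction of linear equations and inequalities together with constraints $V_k(x_i)=y_i$.

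The core step is a small-model property: such a system, if satisfiable, has a solution whose base-$k$ expansions have polynomially many digits. Then an $\mathsf{NP}$ certificate is the solution itself, written in binary. My approach is to view a solution automata-theoretically, reading the digits least significant first, in the spirit of \cref{theorem_Buchi_Bruyere}.
\begin{itemize}
\item A linear equation $\sum a_i x_i = b$ is checked by a carry automaton. Its carries are bounded by $\sum|a_i|+|b|$, so it has exponentially many states, but each state has a polynomial description.
\item The constraints $V_k(x_i)=y_i$ are checked by a few bits of extra state per constraint, recording whether $y_i$'s single nonzero digit has been seen and whether all digits of $x_i$ below that position are zero.
\end{itemize}
The product automaton then has exponentially many states, each with a polynomial-size description. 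A naive pumping argument therefore only yields exponentially long witnesses, i.e.\ exponentially many digits, which is too long for the certificate.

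This is the main obstacle. The fix I would try is to compress the witness: between the finitely many positions where something ``happens'' (a $V_k$ witness position, or a change in the pattern of the digits), the run should be ultimately periodic. Then guess:
\begin{itemize}
\item the polynomially many significant positions, written in binary;
\item the state at each of these positions;
\item short periodic blocks between consecutive significant positions.
\end{itemize}
Correctness requires showing that, on each stretch, reachability between two given carry-states with a block repeated $N$ times is decidable in polynomial time, with $N$ given in binary. Since a linear carry automaton behaves affinely on a repeated block, this amounts to iterating an affine map on carries. The delicate part is ordering the $y_i$'s and proving that polynomially many segments suffice. I would argue this by guessing the relative order of the positions of all $y_i$ and then applying a Presburger-style small-solution bound (à la Papadimitriou) to the segment lengths.
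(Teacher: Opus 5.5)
The paper gives no proof of this statement; it is quoted from Guépin, Haase and Worrell, so your proposal has to stand on its own. The hardness half is fine. The upper bound has a genuine gap, located exactly at your core step.

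First, the polynomial small-model property is false. Take $m$ coprime to $k$ such that the order $e$ of $k$ modulo $m$ is exponential in the bit-length of $m$ (e.g.\ $k=2$, $m=3^n$, $e=2\cdot 3^{n-1}$), and encode $mq$ by doubling. The sentence
\[
\exists x\,\exists q\,\bigl(V_k(x)=x\wedge \one<x\wedge x=mq+\one\bigr)
\]
is true, but its solutions are exactly $x=k^{et}$ with $t\ge 1$. So every solution has more than $e$ digits.

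Second, your compressed certificate does not exist for this instance either. Read least significant digits first. On positions $0,\dots,e-1$ the $x$-track is zero, and the carry of $x=mq+\one$ runs through the values $c_j\in\{1,\dots,m-1\}$ with $c_j\equiv k^{-j}\pmod m$. These are pairwise distinct. A fixed block $v$ acts on carries by the affine contraction $c\mapsto (c+s_v)/k^{|v|}$. The quantity $(k^{|v|}-1)c-s_v$ is an integer that is divided by $k^{|v|}$ at every repetition, so after $O(1+\log m/|v|)$ repetitions the carry sits at the fixed point and repeats. Hence every stretch of the form $u\,v^N w$ with short $u,v,w$ covers only polynomially many positions. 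Polynomially many such stretches cannot cover $e$ positions. Equivalently, the digits of $q$ are periods of the base-$k$ expansion of $1/m$, whose minimal period is $e$.

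The ease of iterating an affine map is exactly why periodic blocks are too weak. They only describe runs that quickly settle at a fixed carry. A linear equation with large coefficients, however, admits exponentially long simple runs through its carry states.

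What is missing is a way to certify such long aperiodic stretches without describing their digits. Your pumping observation is correct: positions can be taken exponentially large, hence of polynomial bit-length. So you may guess the significant positions and the carry states there in binary. Each stretch must then be certified arithmetically. A run of length $\ell$ from carry vector $c$ to carry vector $c'$, with a prescribed set of tracks equal to zero, exists iff $c+A\vec y=k^{\ell}c'$ for some $\vec y\in[0,k^{\ell})^{n'}$. Here $A$ collects the coefficients of the remaining tracks.

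In the example this condition is just $k^{\ell}\equiv 1\pmod m$, which fast exponentiation checks. In general, you must show that such linear-exponential conditions can be verified in (nondeterministic) polynomial time from $\ell$ in binary, even though $k^{\ell}$ has exponentially many digits. This is where the real work in the cited result lies, and your proposal does not address it.

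A minor point: your rewriting of $\neg(V_k(x)=y)$ is wrong at $x=y=0$. Simply use $\exists u\,\bigl(V_k(x)=u\wedge(u<y\vee y<u)\bigr)$.
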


Nevertheless, some purely logical questions about Büchi Arithmetic remain open. In particular, no natural axiomatization of this theory is known. In our earlier work \cite{kovalyov2024,kovalyov2025_trudy}, we suggested a simple axiomatization of $\BA_k$ based on bounds for witnesses for existential quantifiers. However, the axioms are not very natural, and it would be difficult to verify whether an arbitrary structure is a model of these axioms. The following related negative result was obtained by A.~Zapryagaev \cite{zapryagaev}.

\begin{theorem}
    The theory axiomatized as follows is not complete:
        \begin{enumerate}[start=1,label={\rm (\roman*)}]
            \item axioms of Presburger Arithmetic;
            \item $V_2(x) = 0 \leftrightarrow x = 0$;
            \item $\neg \exists y \ (x = y + y) \rightarrow V_2(x) = 1$;
            \item $x = t + t \rightarrow V_2(x) = V_2(t) + V_2(t)$.
        \end{enumerate}
\end{theorem}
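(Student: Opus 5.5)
The plan is to exhibit a model of the theory in the statement (call it $\T$) that falsifies a sentence true in $(\N,0,S,+,V_2,\leq)$. Since the standard structure is clearly a model of $\T$, this shows that $\T$ is not complete. The sentence I would use is
$$\forall x\ \bigl(V_2(V_2(x)) = V_2(x)\bigr),$$
which holds in $\N$ because $V_2(x)$ is always $0$ or a power of $2$.

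\textbf{Step 1: the Presburger reduct.} Take the discretely ordered abelian group $G=\mathbb{Q}\times\mathbb{Z}$ with the lexicographic order. It is a $\mathbb{Z}$-group: its least positive element is $(0,1)$, and $G/nG\cong\mathbb{Z}/n\mathbb{Z}$ for each $n$. Hence its nonnegative part $M$ is a model of Presburger Arithmetic. An element $(q,z)$ is divisible by $n$ in $M$ iff $n\mid z$. In particular, every element of
$$D=\{(q,0): q>0\}$$
is divisible by every $2^n$, and $D$ is closed under halving and doubling.

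\textbf{Step 2: defining $V_2$.} Every nonzero $x\in M\setminus D$ has the form $(q,z)$ with $z\neq 0$, or $q=0$ and $z>0$. For such $x$, write $z=2^n u$ with $u$ odd; then $x=2^n y$ with $y$ not even. Set $V_2(x)=2^n$, i.e.\ the element $(0,2^n)$. For $x\in D$, set $V_2(x)=3x$, which again lies in $D$. Finally set $V_2(0)=0$.

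Next verify the axioms. Axiom (ii) holds because every nonzero $x$ gets a nonzero value. Axiom (iii) holds because a non-even $x$ lies outside $D$ and has $n=0$, so $V_2(x)=1$. For axiom (iv), take $x=t+t$ and argue by cases:
\begin{itemize}
\item If $t\in D$, then $x\in D$ and $V_2(x)=6t=2\cdot 3t=V_2(t)+V_2(t)$.
\item If $t\notin D$, $t\neq 0$, then $x\notin D$, the $2$-adic exponent of the second coordinate rises by one, and $V_2(x)=2V_2(t)$.
\item If $t=0$, both sides are $0$.
\end{itemize}

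\textbf{Step 3: the sentence fails.} For $x=(1,0)\in D$ we get $V_2(V_2(x))=V_2(3x)=9x\neq 3x=V_2(x)$. So $M$ is a model of $\T$ in which the displayed sentence is false, while that sentence holds in $\N$. Therefore $\T$ does not decide it.

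The only point needing real care is Step 1: producing a Presburger model with nonzero elements infinitely divisible by $2$. One must confirm that $\mathbb{Q}\times\mathbb{Z}$ (lexicographic) really is a $\mathbb{Z}$-group, i.e.\ the divisibility computation of $G/nG$. The rest is a bookkeeping check that the piecewise definition respects axiom (iv) across $D$ and its complement. This works because both $D$ and $M\setminus D$ are closed under doubling and halving where halving is defined, so the two pieces never interact.
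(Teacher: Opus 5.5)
The paper gives no proof of this statement. It is quoted as background from Zapryagaev's work, so there is no internal argument to compare yours with.

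Judged on its own, your countermodel argument is correct:
\begin{itemize}
\item $G=\mathbb{Q}\times\mathbb{Z}$ with the lexicographic order has least positive element $(0,1)$ and $nG=\mathbb{Q}\times n\mathbb{Z}$. So it is a $\mathbb{Z}$-group, and its nonnegative part is a model of Presburger Arithmetic.
\item Your case split for (ii)--(iv) is right. For $x\notin D$ with $q>0$ the second coordinate $z$ may be negative. This is harmless, since you only use the $2$-adic valuation of $z\neq 0$, and doubling raises it by one.
\item $x=(1,0)$ refutes $\forall x\,(V_2(V_2(x))=V_2(x))$, which holds in $\N$.
\end{itemize}

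You should make explicit how you read ``axioms of Presburger Arithmetic''. Your model satisfies the induction schema only for formulas of the Presburger language $\{0,S,+,\leq\}$. That is the intended reading, and it is essential. If induction were allowed for formulas mentioning $V_2$, your sentence would be provable by strong induction:
\begin{itemize}
\item for odd $x$, axiom (iii) gives $V_2(V_2(x))=V_2(\underline{1})=\underline{1}=V_2(x)$;
\item for $x=t+t$ with $t>0$, two applications of (iv) give $V_2(V_2(x))=V_2(V_2(t))+V_2(V_2(t))$, and the hypothesis for $t<x$ finishes.
\end{itemize}
So your model could not satisfy that stronger theory.

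It is also worth seeing how your model sits relative to the paper's $\T_2$:
\begin{itemize}
\item On $D$ you have $V_2(x)=3x>x$, and the only elements satisfying $P_2$ are the standard powers of $2$.
\item Hence (V2), (V3), (V4) and (V5) all fail. For instance, $(1,0)$ and $(2,0)$ violate (V2), and $(1,0)$ violates (V3) and (V4).
\item (Add), (Ord0), (Ord1), (Mod), (V0) and (V1) hold.
\item (Comp) holds as well, since every instance only involves the finitely many standard powers of $2$ below a standard $d$.
\end{itemize}
Your construction thus shows concretely what the paper's extra $V_k$-axioms are there to exclude: arbitrary behaviour of $V_2$ on elements that are infinitely divisible by $2$.
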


The absence of an adequate axiomatization of $\BA_k$ makes the study of model-theoretic and interpretability questions difficult. For instance, no explicit construction of a nonstandard model of $\BA_k$ is currently known.

In this paper, we provide a natural axiomatization of $\BA_k$ based on basic properties of $+$ and $V_k$, together with a comprehension scheme restricted to $\Delta_0$-formulas (see below for the definition of $\Delta_0$-formulas). Here, natural numbers are interpreted as finite (multi)sets of powers of $k$ via their base-$k$ expansions (see below). Alternatively, the comprehension scheme can be replaced by an induction scheme for powers of $k$ (see \cref{remark_on_induction}). The main idea is to express, by means of a $\Delta_0$-formula, the existence of a run of a finite automaton (cf. \cite{VILLEMAIRE,haase}) and to formalize within this axiomatization a proof of \cref{theorem_Buchi_Bruyere}.

Let us now describe our axiomatization in detail. Fix a natural number $k \geq 2$ as the base for Büchi Arithmetic. For each $n \in \N$, we denote by $\underline{n}$ the term
\[
\underbrace{S(S(\dots S}_{n \text{ times}}(0)\dots))
\]
called the \emph{numeral} corresponding to $n$. For a term $t$ and $n \in \N$, we denote by $n t$ the term
\[
\underbrace{t + (t + (\dots (t}_{n \text{ times}} + 0)\dots)).
\]
A formula $\varphi$ is said to be \emph{bounded}, or a \emph{$\Delta_0$-formula}, if every quantifier occurring in $\varphi$ is of the form $\exists x \leq t$ or $\forall x \leq t$, where $x$ does not occur in $t$. We denote by $\Delta_0$ the set of bounded $\mathcal L_{\BA}$-formulas. We denote by $\Pi_1$ the set of formulas of the form $\forall \vv x \ \theta$, where $\theta \in \Delta_0$. Finally, we write $\varphi(x_1, \dots, x_n)$ to indicate that the free variables of $\varphi$ are among $\{x_1, \dots, x_n\}$.

We introduce the following formulas:
\begin{align*}
        P_k(d) & \ : \iff (d > 0) \wedge (V_k(d) = d), \\
        x \equiv_k y & \ : \iff \exists z \leq x + y \ (x = y + kz \vee y = x + kz),
\end{align*}
and, for each $a \in \{0, \dots, k - 1\}$, a formula
\begin{align*}
        (x)_d = a & \ :\iff \ P_k(d) \wedge \exists y < d \ \exists z \leq x\ \Big((x = y + ad + z) \wedge (z = 0 \vee V_k(z) > d) \Big).
\end{align*}
    
Here $P_k$ defines the set of powers of $k$, and $(x)_d = a$ expresses that the coefficient before $d$ in the base-$k$ expansion of $x$ is equal to $a$. We write $\exists^{P_k} d \ \varphi$ and $\forall^{P_k} d \ \varphi$ as abbreviations for $\exists d \,(P_k(d) \wedge \varphi)$ and $\forall d \,(P_k(d) \rightarrow \varphi)$, respectively. Also, we will sometimes write $d \in P_k$ instead of $P_k(d)$ when convenient.

\begin{definition}
    A theory $\T_k$ consists of the universal closures of the following formulas:
    \begin{itemize}
        \item[{\rm (Add)}] axioms of discretely ordered commutative cancelative monoids;

        \item[{\rm (Ord0)}] $x \geq 0$;

        \item[{\rm (Ord1)}] $x \leq y \leftrightarrow \exists z \leq y\  (z + x = y)$;

        \item[{\rm (Mod)}] $\bigvee_{a = 0}^{k - 1} (x \equiv_k \underline{a})$;

        \item[{\rm (V0)}] $V_k(0) = 0$, $V_k(\underline 1) = \underline 1$;

        \item[{\rm (V1)}] $V_k(k x) = k V_k(x)$, $P_k(d) \to \bigwedge _{a = 1}^{k - 1} (V_k(a d) = d)$;

        \item[{\rm (V2)}] $(x > 0 \wedge y > 0 \wedge V_k(x) < V_k(y)) \to V_k(x + y) = V_k(x)$;

        \item[{\rm (V3)}] $x > 0 \to \exists y \leq x \ \bigvee _{a = 1}^{k - 1} \big(x = y + a V_k(x) \wedge (V_k(y) > V_k(x) \vee y = 0)\big)$;

        \item[{\rm (V4)}] $ x > 0 \to  \existsP d \leq x \ ( d \leq x < k d)$;

        \item[{\rm (V5)}] $V_k(V_k(x)) = V_k(x);$

        \item[{\rm (Comp)}] $P_k(d) \to \exists x < d \ \forallP d' < d \ \Big( \big((x)_{d'} = 1 \land \varphi(d', \vv y) \big) \vee \big((x)_{d'} = 0 \land \neg \varphi(d', \vv y) \big)\Big)$ for every $\Delta_0$-formula $\varphi(d, \vv y)$.
    \end{itemize}
\end{definition}

Let us clarify the meaning of some of the axioms. Axiom (Ord1) states that subtraction is well-defined. Axiom (V3) asserts that, for $x > 0$, there exists a coefficient before $V_k(x)$ in the $k$-expansion of $x$. Axiom scheme (Comp) corresponds to \emph{comprehension} when we interpret numbers as finite sets of powers of $k$. In \cref{section_axiomatization}, we prove the completeness of $\T_k$ (see \cref{main_theorem}).

\section{Preliminaries}\label{section_preliminaries}
We fix a natural number $k \geq 2$ as the base of Büchi arithmetic. By $\mathcal L_{\BA}$ we denote the language of Büchi arithmetic, namely $(0, S, +, V_k, \leq)$. We denote the standard model $(\N, 0, S, +, V_k, \leq)$ simply by $\N$. The theory $\BA_k$ is the elementary theory of $(\N, 0, S, +, V_k, \leq)$.

By a \emph{finite automaton} (FA for short) over a finite alphabet $\Sigma$ we mean a tuple $A = (\Sigma, Q, q_0, F, \Delta)$, where
\begin{enumerate}[(i)]
    \item $Q$ is a finite set of states;
    \item $q_0 \in Q$ is the initial state;
    \item $F \subseteq Q$ is the set of accepting states;
    \item $\Delta \subseteq Q \times \Sigma \times Q$ is the transition relation.
\end{enumerate}
Given a finite automaton $A$, we denote by $L(A)$ the language recognized by $A$, namely 
$$L(A) \ := \ \{w \in \Sigma^* \mid \text{some accepting state is reachable from } q_0 \text{ when } A \text{ reads }w\}.$$

A finite automaton $A = (\Sigma, Q, q_0, F, \Delta)$ is called a \emph{deterministic finite automaton} (DFA for short) if $\Delta$ is a total function $Q \times \Sigma \to Q$. Given a DFA $A$, a state $q \in Q$, and a word $w \in \Sigma^*$, there exists a unique state $q' \in Q$ reached from $q$ after $A$ reads $w$; we denote this state by $A(q, w)$. 

A language $L \subseteq \Sigma^*$ is said to be \emph{recognizable} if there exists a FA (or, equivalently, a DFA) $A$ such that $L = L(A)$.

We denote by $\Sigma_k$ the alphabet $\{0, \dots, k - 1\}$. We say that a word $w \in \Sigma_k^*$ is a \emph{$k$-expansion} of $a \in \N$ if
\[
\sum_{i < |w|} w_i k^i = a.
\]
Similarly, we say that a word $w \in (\Sigma_k^m)^*$ is a \emph{$k$-expansion} of $(a_1, \dots, a_m) \in \N^m$ if
\[
\sum_{i < |w|} (w_i)_j k^i = a_j
\]
for every $j = 1, \dots, m$. Here $w_i$ denotes the $i$-th symbol of $w$ (that is, an element of $\Sigma_k^m$), and $(w_i)_j$ denotes the $j$-th component of $w_i$ (that is, an element of $\Sigma_k$).

Given a set $A \subseteq \N^m$, we denote by $[A]_k$ the language consisting of all $k$-expansions of elements of $A$. We say that $A \subseteq \N^m$ is \emph{$k$-recognizable} if the language $[A]_k$ is recognizable (as a language over $\Sigma_k^m$).

Given finite automata $A = (\Sigma_k^m, Q, q_0, F, \Delta)$ and
$A' = (\Sigma_k^m, Q', q_0', F', \Delta')$, we use the following notation:
\begin{itemize}
    \item $A \times A'$ denotes the product automaton of $A$ and $A'$, that is,
    \[
    A \times A' = (\Sigma_k^m, Q \times Q', (q_0, q_0'), F \times F', \Delta''),
    \]
    where
    \[
    \Delta'' := \{((q, q'), \vv a, (q_1, q_1')) \mid (q, \vv a, q_1) \in \Delta
    \text{ and } (q', \vv a, q_1') \in \Delta'\};
    \]

    \item $\pr(A)$ denotes the finite automaton obtained from $A$ by removing the last digit from the label of each transition;

    \item $A^{+n}$ denotes the finite automaton
    $(\Sigma_k^{m+n}, Q, q_0, F, \Delta^{+n})$, where
    \[
    \Delta^{+n} := \{(q, (\vv a, \vv b), q') \in Q \times \Sigma_k^{m+n} \times Q
    \mid (q, \vv a, q') \in \Delta\};
    \]

    \item $\overline{A} := (\Sigma_k^m, Q, q_0, Q \setminus F, \Delta)$ denotes the
    automaton obtained from $A$ by complementing the set of accepting states;

    \item $\det(A)$ denotes the determinization of $A$, with $\mathcal P(Q)$ as
    the set of states.
\end{itemize}

The following facts are well known:
\begin{itemize}
    \item $L(A \times A') = L(A) \cap L(A')$;

    \item $
    L(\pr(A)) =
    \{u \in (\Sigma_k^{m-1})^*
    \mid \exists w \in L(A) \text{ such that } u = \pr(w)\}$, where $\pr(w)$ is obtained from $w$ by removing the last digit from each symbol of $w$;

    \item $L(A^{+n}) = L(A) \times (\Sigma_k^n)^*$;

    \item if $A$ is a DFA, then $L(\overline{A}) = \overline{L(A)}$;

    \item $L(\det(A)) = L(A)$ and $\det(A)$ is a DFA.
\end{itemize}

\section{Completeness of $\T_k$}\label{section_axiomatization}

This section is devoted to proving the following theorem.

\begin{theorem}\label[theorem]{main_theorem}
    The theory $\T_k$ axiomatizes $\BA_k$. In particular, $\BA_k$ admits a $\Pi_1$-axiomatization.
\end{theorem}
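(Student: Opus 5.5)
Soundness is routine: every axiom of $\T_k$ holds in $\N$. The only non-trivial case is (Comp), where the witness is $x=\sum\{d'<d : d'\in P_k,\ \N\models\varphi(d',\vv y)\}$. Each axiom is the universal closure of a $\Delta_0$-formula, because the $\exists^{P_k}$, $\exists y\le x$ and $\forallP d'<d$ quantifiers are bounded. This gives the $\Pi_1$ clause once completeness is established. For completeness, I will show that every $\mathcal L_{\BA}$-formula $\varphi(\vv x)$ is $\T_k$-provably equivalent to a formula of a fixed ``automaton form''. Each such formula is decided by $\T_k$, so $\T_k$ is complete and hence equals $\BA_k$.

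The automaton form is defined as follows. Given a DFA $A$ over $\Sigma_k^m$ with $Q=\{q_1,\dots,q_s\}$, I code a run on $\vv x$ by a tuple $\vv r=(r_1,\dots,r_s)$. Here $(r_j)_{d}=1$ means that the run is in state $q_j$ just before reading the digit at position $d$. Let $\mathrm{Run}_A(\vv x,\vv r,D)$ be the $\Delta_0$-formula stating the following: for each $d\in P_k$ with $d\le D$, exactly one $r_j$ has digit $1$ at $d$ and all others have digit $0$; the initial state holds at $d=1$; and consecutive positions $d$, $kd$ respect $\Delta$ applied to the digit vector $((x_1)_d,\dots,(x_m)_d)$. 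Let $\mathrm{Acc}_A(\vv x)$ say that for $D$ a power of $k$ exceeding all $x_i$ (available by (V4)), there is a run $\vv r<kD$ whose state at $D$ is accepting. I will prove in $\T_k$ three facts. (a) Digits are well defined: for $d\in P_k$, exactly one $a<k$ satisfies $(x)_d=a$. This uses (V3), (V2), (V1), (Mod) and an induction-like argument, which I obtain from (Comp) by taking the set of positions where a property fails and applying (V3) to it (least-element principle for powers of $k$). (b) Runs exist and are unique: for every $\vv x$ and $D$ there is exactly one $\vv r$ with $\mathrm{Run}_A$. Uniqueness follows from the least-element principle applied to the least differing position. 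Existence is the main obstacle, since comprehension only builds sets defined by a $\Delta_0$ property of the position, while the state at $d$ depends on the whole prefix. My first attempt is to form, via (Comp), the set of positions $d$ up to which a run exists and argue it is everything. This is circular, because ``exists a run up to $d$'' quantifies over $\vv r< kd$, which is bounded, so it is in fact $\Delta_0$. Then the least-element principle on the complement, together with extension by one step (adding $kd$ times the appropriate indicator digits, justified by (a) and (V2)), gives existence. (c) Acceptance is independent of the choice of $D$: this holds because leading zeros do not matter once the DFA is taken to have a suitable property, for instance by first making $L(A)$ closed under padding with zero symbols.

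With these facts, I prove by induction on $\varphi$ that there is a DFA $A_\varphi$ with $\T_k\vdash\varphi\leftrightarrow\mathrm{Acc}_{A_\varphi}$. The base cases are the atoms $x=y$, $x+y=z$, $V_k(x)=y$ and $x\le y$, with $S$ handled via $+\,\underline1$. For these I use the standard automata and verify the equivalence in $\T_k$ by digit-wise reasoning with carries. This needs the lemma that $x+y=z$ iff there is a carry sequence $c$ (again a $\Delta_0$-coded run) with the usual local digit equations, proved via (a), (V2), (V3) and the least-element principle. For negation, uniqueness of runs (b) gives $\neg\mathrm{Acc}_A\leftrightarrow\mathrm{Acc}_{\overline A}$. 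Conjunction uses $A\times A'$ after aligning variables with $A^{+n}$. For $\exists x$, I use $\det(\pr(A))$. The forward direction is easy. For the converse, an accepting run of the determinized automaton must yield a witness $x$. I build it by (Comp) as a number whose digits are chosen backwards through the subset-construction states, which again requires a $\Delta_0$ definition of ``position $d$ carries digit $a$ in a backward-consistent choice''. This is handled by coding the nondeterministic run as a tuple as in (b).

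Finally, a sentence $\varphi$ is equivalent to $\mathrm{Acc}_{A_\varphi}$ with $m=0$. That is a statement about a run on the empty input of fixed length, so its truth is determined by whether $q_0$ is accepting, and $\T_k$ proves either it or its negation. Hence $\T_k$ is complete and sound, so it axiomatizes $\BA_k$.
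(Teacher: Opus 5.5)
Your outline is correct in substance and follows the paper's architecture. It uses the least-element principle on $P_k$ for $\Delta_0$ properties, obtained from (Comp) by taking $V_k$ of the comprehension code. It codes runs by $\Delta_0$ formulas (your $\mathrm{Run}_A$ plus $\exists\vv r<kD$ is essentially the paper's $W_{A,q}$). It verifies the standard automata for the atoms by induction on $d$, and it closes under complement, product with $A^{+n}$, and $\det(\pr(\cdot))$.

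You differ from the paper in two places.
\begin{itemize}
\item At the end, you decide a sentence directly: for $m=0$ the automaton form is provably equivalent to ``$q_0\in F$''. The paper instead goes through $\Delta_0$-completeness (\cref{prop_Delta_0_completeness}), so your ending is slightly more self-contained.
\item More substantively, your fact (c), that acceptance does not depend on the chosen power $D$, is needed by the paper's proof of \cref{theorem_Buchi_formalized} but is not stated there. In the $\exists$-step, \cref{lemma_for_automaton_constructions}(iii') at $d=g(\vv x)$ only produces witnesses $y<g(\vv x)$, while the hypothesis on $\psi$ is evaluated at $g(\vv x,y)$. For instance, take $\exists y\,(S(x)=y)$ at $x=k-1$. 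The witness $y=k$ needs two digits but $g(x)=k$ allows only one, so the unmodified $\det(\pr(A_S))$ rejects. The same mismatch arises when aligning variables with $A^{+n}$.
\end{itemize}

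Three points in your sketch need tightening.

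(1) Property (c) is not preserved by projection. If $L(A)$ is closed under appending and deleting trailing zero symbols, $L(\pr(A))$ is only closed under appending them. So the accepting set of $\det(\pr(A))$ must be replaced by the set of states from which reading zero symbols leads to an accepting state. You then have to prove in $\T_k$ that this saturation is correct even when the stretch between $g(\vv x)$ and $D$ is nonstandard. This goes by induction on $d$, showing that the state at $d$ stays in the finite orbit of the zero transition; that orbit condition is a finite disjunction, hence $\Delta_0$.

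(2) For the converse of the $\exists$-step you do not need a (Comp)-built ``backward-consistent choice''. You give no $\Delta_0$ definition for such a choice, and a single comprehension instance cannot supply it. Instead, follow \cref{lemma_for_automaton_constructions}(ii),(iii) and Appendix~\ref{appendix_1}. Prove by forward induction on $d$ the bounded statement that every state in the subset reached at $d$ is the endpoint of a coded run of $A$ on $(\vv x,y)$ for some $y<d$. At each step the witness is extended from $y$ to $y+bd$.

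(3) Your carry lemma for the atoms needs an inductive invariant about initial segments of the expansions. The truncation $x|_d$ and \cref{lemma_for_restriction} supply exactly this, as in Appendix~\ref{appendix_2}.
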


Before proving this theorem, we need some auxiliary results. The following lemma establishes some basic facts about $V_k$ and $(x)_d$. The scheme (Comp) is not used in this lemma.

\begin{lemma}\label[lemma]{lemma_for_coefs_in_expansion}
    The following are provable in $\T_k$:
    \begin{enumerate}[start=1,label={\rm (\roman*)}]
        \item $x > 0 \to P_k(V_k(x))$;
        \item $V_k(x) \leq x$;
        \item $x > 0 \to \bigvee_{a = 1}^{k - 1} \big((x)_{V_k(x)} = a \big)$;
        \item $\big(x > 0 \wedge P_k(d) \wedge d < V_k(x)\big) \to (x)_d = 0$;
        \item $\bigwedge_{a' \ne a} \Big ( (x > 0 \wedge (x)_d = a) \to \neg \big ((x)_d  = a' \big) \Big)$ for all $a \in \{0, \dots, k - 1\}$;
        \item $x \not\equiv_k 0 \to V_k(x) = \one$;
        \item $\big(d > \one \wedge P_k(d)\big) \to \existsP d' \  \big(d = k d' \big)$;
        \item $\big(x  > 0 \wedge y > 0 \wedge V_k(x) \leq V_k(y)\big) \to V_k(x + y) \geq V_k(x)$.
    \end{enumerate}
\end{lemma}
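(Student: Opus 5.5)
We prove the eight items in roughly the stated order. Each follows from the non-comprehension axioms by short algebraic manipulations in a discretely ordered cancellative monoid. The recurring tools are (V2), (V3) and (V5), together with the fact that $a V_k(x)$ has valuation $V_k(x)$ by (V1).

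\textbf{Items (i) and (ii).} For (i), let $x>0$. We first note $V_k(x)>0$: if $V_k(x)=0$, then (V3) gives $x = y + a\cdot 0 = y$ with $V_k(y)>0$ or $y=0$. The case $y=0$ contradicts $x>0$, and $V_k(y)>0=V_k(x)$ with $y=x$ is absurd. Then (V5) gives $V_k(V_k(x))=V_k(x)$, hence $P_k(V_k(x))$. For (ii), the case $x=0$ is (V0). For $x>0$, (V3) writes $x=y+aV_k(x)$ with $a\ge 1$, so $x\ge V_k(x)$ by monotonicity of addition.

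\textbf{Items (iii)--(v): digits.} For (iii), take the $y$ and $a$ from (V3) and set $d=V_k(x)$. We want to witness $(x)_d=a$ with the definitional ``$y$'' equal to $0$ and ``$z$'' equal to the (V3) remainder; we get $V_k(z)>d$ or $z=0$, as required. Here $d\in P_k$ by (i), and $z\le x$ holds by (Ord1). For (iv), write $x = 0 + 0\cdot d + x$. Then $V_k(x)>d$ and $x\le x$ give $(x)_d=0$.

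Item (v) is the main uniqueness statement and, I expect, the main obstacle. Suppose $x=y+ad+z=y'+a'd+z'$ with $y,y'<d$ and $z,z'$ each either $0$ or of valuation $>d$, and $a\ne a'$. I would reduce modulo the ``$d$-level''. The intended argument compares valuations: by cancellation, assume without loss of generality $y\le y'$ and cancel $y$. This leaves $ad+z = u + a'd + z'$ with $u<d$. We then need to show $u=0$ and $a=a'$. The key sublemma is: if $w<d$, $w>0$, and $d\in P_k$, then $V_k(w)<d$. This is proved via (V4) and monotonicity, which give $V_k(w)\le w<d$ by (ii). Hence if $u>0$, then (V2) computes $V_k$ of the right side as $V_k(u)<d$, while the left side has valuation $\ge d$ by (viii). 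This is a contradiction, so $u=0$.

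Then from $ad+z=a'd+z'$ with $a<a'$ we cancel to get $z=(a'-a)d+z'$. The right side has valuation exactly $d$ by (V1) and (V2) (or $z'=0$), while $z$ has valuation $>d$ or is $0$. This is a contradiction. To avoid circularity, (viii) should be proved before (v).

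\textbf{Items (vi)--(viii).} For (vi), let $x\not\equiv_k 0$. If $V_k(x)>1$, then by (vii) $V_k(x)=kd'$, and the (V3) decomposition gives $x\equiv_k 0$. To see this, note that $y$ has valuation $>V_k(x)$, and its valuation is a power of $k$ that is $>1$ and therefore itself of the form $kd''$. From this we need $y=k(\cdot)$, which requires a divisibility lemma: $V_k(y)=kd$ implies $y\equiv_k 0$. I would obtain this by applying (Mod) to $y$: if $y\equiv_k a$ with $a\neq 0$, then $y=a+kz$, and (V2) with $V_k(a)=1<V_k(kz)$ gives $V_k(y)=1$. For (vii), apply (Mod) to $d$. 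If $d\equiv_k a$ with $a\ne0$, the same computation gives $V_k(d)=1$, contradicting $d>1$. Hence $d=kz$, and $d=V_k(d)=kV_k(z)$ by (V1), so by cancellation $z=V_k(z)$ is a power of $k$. Finally, for (viii), the case $V_k(x)<V_k(y)$ is (V2). If $V_k(x)=V_k(y)=d$, decompose both by (V3) and add. The sum of the leading parts is $(a+b)d$, whose valuation is $\ge d$: either $a+b<k$ and (V1) applies, or $a+b=k+c$, and then $kd$ and $cd$ have valuations $kd>d$ and $d$, respectively. Combining with the remainders via (V2) yields valuation $\ge d$. The case analysis in (viii), together with the ordering of the dependencies (viii) before (v), is where the bookkeeping is heaviest.
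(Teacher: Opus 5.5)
Your items (i)--(vii) are fine, modulo the remarks at the end. The argument for (viii), however, has a genuine gap in a subcase you skip. Write $x=r+ad$ and $y=s+bd$ by (V3), where $d=V_k(x)=V_k(y)$. The problem case is $a+b=k$, i.e.\ $c=0$.

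Adding the remainders one at a time via (V2) works only when the leading part has valuation exactly $d$. Then each remainder ($0$ or of valuation $>d$) lies strictly above it. This covers $a+b<k$, and $a+b=k+c$ with $c\ge 1$.

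When $c=0$, the leading part is $kd$, of valuation $kd>d$. So $x+y=r+s+kd$ is a sum of terms all of valuation $>d$, and these valuations can coincide: for $k=2$ and $x=y=3$ you get $r=s=kd=2$. In that situation (V2) gives nothing. To conclude $V_k(r+s+kd)\ge d$ you would need that a sum of two elements of equal valuation $e>d$ still has valuation $>d$, and that is (viii) again, one level up. Iterating only pushes the problem upward with no base case. Making it rigorous would need an induction on valuations, which you do not supply. It is also not available at this stage: the paper proves this lemma without (Comp), and derives its induction scheme from the lemma afterwards.

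The missing idea is the paper's multiplication-by-$k$ trick, which needs no digit decomposition.
First show $V_k((k-1)z)=V_k(z)$ for $z>0$. If the two differed, (V2) applied to $kz=(k-1)z+z$ would give $V_k(kz)\le V_k(z)$. This contradicts $V_k(kz)=kV_k(z)>V_k(z)$, which follows from (V1) and (i).
Now suppose $V_k(x+y)<V_k(x)=V_k(y)$. Then $V_k((k-1)(x+y))=V_k(x+y)$ lies below both $V_k(x)$ and $V_k(y)$. Two applications of (V2) to $k(x+y)=(k-1)(x+y)+x+y$ give $V_k(k(x+y))=V_k(x+y)$, contradicting (V1) since $x+y>0$.

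Two minor points.
In (v) you do not actually need (viii). The left side $ad+z$ is either $0$, which is impossible against a positive right side, or has valuation $\ge d$ directly by (V1) and (V2), since $V_k(ad)=d<V_k(z)$ when both are nonzero. So your concern about proving (viii) before (v) disappears.
In (vi), your ``divisibility lemma'' is just the contrapositive of (vi) applied to $y$. Running the same (Mod)+(V2) computation on $x$ itself proves (vi) at once, as the paper does, with no detour through (vii) and (V3).
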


\begin{proof}
    We argue in $\T_k$. 
    \begin{enumerate}[(i)]
        \item Let $x > 0$. By (V3), there exist $y$ and $a \in\{1, \dots, k - 1\}$ such that $x = y + aV_k(x)$ and either $V_k(y) > V_k(x)$ or $y = 0$. This implies that $V_k(x) > 0$ since if $V_k(x) = 0$, then $x = y$ and $V_k(x) = V_k(y)$, which implies that $y = 0$ and, hence, $x = 0$, a contradiction. Next, by (V5), we have $V_k(V_k(x)) = V_k(x)$ and, since $V_k(x) > 0$, we obtain $P_k(V_k(x))$.

        \item The case $x = 0$ is immediate. Suppose $x > 0$. By (V3), we have $x \geq a V_k(x)$ for some $a \in \{1, \dots, k - 1\}$, and thus $x \geq V_k(x)$.
        
        \item This follows directly from (V3) together with part (i).
        
        \item Take $y = 0$ and $z = x$ in the definition of ``$(x)_d = a$''.

        \item Let $x > 0$ and $P_k(d)$. Suppose, towards a contradiction, that $(x)_d = a$ and $(x)_d = a'$ for distinct $a, a' \in \{0, \dots, k - 1\}$. Without loss of generality assume $a < a'$. Then $x$ can be represented as $x = y + ad + z$ and $x = y' + a' d + z'$, where $y, y' < d$, and either $z = 0$ or $V_k(z) > d$, and similarly
        either $z' = 0$ or $V_k(z') > d$. Consider the following cases.

        \begin{enumerate}
            \item Case $y = y'$. By cancellation,
            $$z = (a' - a)d + z'.$$
            This implies $z > 0$, hence $V_k(z) > d$. By (V1), we obtain $V_k((a' - a) d) = d$. If $z' = 0$, then $V_k(z) = d$, contradicting $V_k(z) > d$. If $z' > 0$, then $V_k(z') > d$ and by (V2) we have $V_k((a' - a) d + z' ) = V_k((a' - a) d) = d$, again contradicting $V_k(z) > d$.

            \item Case $y < y'$. Denote by $y' - y$ the unique (positive) element $w$ such that $w + y = y'$, which exists by (Ord1). By cancellation,
            $$z = (y' - y) + (a' - a)d + z'.$$
            Hence $z > 0$, so $V_k(z) > d$. By item (ii), $V_k(y' - y) \leq y' - y < d$, and by (V1), $V_k((a' - a)d) = d$. Thus, by (V2), $V_k((y' - y) + (a' - a) d) = V_k(y' - y) < d$. If $z' > 0$, then $V_k(z') > d$, and again by (V2), $V_k((y' - y) + (a' - a)d + z') < d$. If $z' = 0$, then we trivially have $V_k((y' - y) + (a' - a)d + z') < d$. In both cases this contradicts $V_k(z) > d$.

            \item Case $y' > y$. By cancellation,
            $$(y - y') + z = (a' - a)d + z'.$$
            By (ii), $V_k(y - y') < d$, while by (V1) and (V2) we have $V_k((a' - a)d + z') = d$ and $V_k((y - y') + z) = V_k(y - y') < d$, contradicting $V_k((a' - a)d + z') = d$.
        \end{enumerate}

        \item Let $x \not\equiv_k 0$. By (Mod) there exist $y$ and $a \in \{1, \dots, k - 1\}$ such that $x = k y + \underline a.$ By (V0) and (V1), we have
        $$V_k(\underline a) = V_k(a \one) = V_k(\one) = \one.$$
        If $y = 0$ we are done. Assume $y > 0$. Then by (V1) and (i), $V_k(k y) = k V_k(y) \geq k$, hence, by (V2), $$V_k(x) = V_k(ky + \underline a) = V_k(\underline a) = \one.$$

        \item Let $d > \one$ and $P_k(d)$. If $d \not\equiv_k 0$, then (vi) we obtain that $V_k(d) = \one < d$, which contradicts $P_k(d)$. Hence,  there is $d'$ such that $d = k d'$. We claim that $P_k(d')$. Indeed, 
        $$k d' = d = V_k(d) = V_k(k d') = kV_k(d'),$$
        where the last equality follows from (V1). Hence, we obtain $d' = V_k(d')$. Moreover, since $d > 0$, we have $d' > 0$. So $P_k(d')$ holds.

        \item Fix $x, y > 0$ such that $V_k(x) \leq V_k(y)$. If $V_k(x) < V_k(y)$, the statement follows immediately from (V2). Thus assume $V_k(x) = V_k(y)$ and suppose, towards a contradiction, that $V_k(x + y) < V_k(x)$. We first claim that $V_k((k - 1) z) = V_k(z)$ for any $z > 0$. Indeed, if $V_k(z) < V_k((k - 1)z)$, then by (V2),
        $$kV_k(z) = V_k(k z) = V_k((k - 1)z + z) = V_k(z),$$ a contradiction. If $V_k(z) > V_k((k - 1)z)$, then by (V2),
        $$kV_k(z) = V_k(k z) = V_k((k - 1)z + z) = V_k((k - 1)z) < V_k(z),$$
        a contradiction. This proves the claim. Applying the claim with $z = x + y$, we obtain $V_k((k - 1)(x + y)) = V_k(x + y) < V_k(x) = V_k(y)$. Hence, by two applications of (V2),
        $$V_k((k - 1)(x + y) + x + y) = V_k((k - 1)(x + y)) = V_k(x + y).$$
        Consequently, 
        $$k V_k(x + y) = V_k(k(x + y)) = V_k((k - 1)(x + y) + x + y) = V_k(x + y),$$
        and $k V_k(x + y) = V_k(x + y)$ which is impossible. This contradiction completes the proof.
    \end{enumerate}
\qed\end{proof}

The following proposition and corollary show that the least element principle on $P_k$ and the induction principle on $P_k$ are provable in $\T_k$. This is the only essential use of the scheme (Comp).

\begin{proposition}\label[proposition]{least_element_scheme}
    Let $\varphi(d, \vv y)$ be a $\Delta_0$-formula. Then the formula 
    $$\existsP d \ \varphi(d, \vv y) \to \existsP d \ \Big( \varphi(d, \vv y) \wedge \forallP d' < d  \ \neg \varphi(d', \vv y) \Big)$$
    is provable in $\T_k$.
\end{proposition}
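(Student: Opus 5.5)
Fix $\vv y$ and a power $d_0 \in P_k$ with $\varphi(d_0, \vv y)$. The idea is to use (Comp) to form the "set" of positions below $kd_0$ where $\varphi$ holds, and then take its least element. Since this set is coded by a number, its least element should be read off as $V_k$ of that code.

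By (V1) and Lemma~\ref{lemma_for_coefs_in_expansion}(i), $V_k(kd_0) = kV_k(d_0) = kd_0$, so $P_k(kd_0)$ holds. Apply (Comp) with the power $kd_0$ and the $\Delta_0$-formula $\varphi(d', \vv y)$. This yields $x < kd_0$ such that for every $d' \in P_k$ with $d' < kd_0$ we have $(x)_{d'} = 1 \leftrightarrow \varphi(d', \vv y)$ and $(x)_{d'} = 0 \leftrightarrow \neg\varphi(d',\vv y)$. Both directions follow from the disjunction together with Lemma~\ref{lemma_for_coefs_in_expansion}(v); the case $x = 0$ needs a separate small check. The key step is to show $x > 0$. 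Indeed, if $x = 0$, then the decomposition $0 = y + 1\cdot d_0 + z$ required by $(x)_{d_0} = 1$ is impossible, since $d_0 > 0$. Hence $x > 0$.

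Now set $d := V_k(x)$. By Lemma~\ref{lemma_for_coefs_in_expansion}(i), $d \in P_k$, and by (ii), $d \le x < kd_0$. By (iii), $(x)_d = a$ for some $a \in \{1,\dots,k-1\}$. By (v), $(x)_d \neq 0$, so the (Comp) clause for $d$ forces $(x)_d = 1$ and $\varphi(d, \vv y)$. For minimality, take any $d' \in P_k$ with $d' < d$. Then $d' < kd_0$, and Lemma~\ref{lemma_for_coefs_in_expansion}(iv) gives $(x)_{d'} = 0$. Using (v) again, $(x)_{d'} \ne 1$, so the (Comp) clause yields $\neg\varphi(d', \vv y)$. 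Thus $d$ is the required witness.

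The main obstacle is the bookkeeping in (Comp). The clause is a disjunction, so one must rule out $(x)_{d'} = 1$ and $(x)_{d'} = 0$ holding simultaneously. Lemma~\ref{lemma_for_coefs_in_expansion}(v) does this, but it assumes $x > 0$, which is exactly why $x > 0$ is established first. Alternatively, we can avoid the bound $kd_0$ entirely by applying (Comp) directly with $d_0$ to the formula $\varphi(d',\vv y)$, which handles $d' < d_0$. If $x = 0$ there, then no $d' < d_0$ satisfies $\varphi$ (since $(0)_{d'} = 1$ fails), and $d_0$ itself is least. If $x > 0$, we argue with $V_k(x)$ exactly as above. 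This variant is cleaner, and I would present it.
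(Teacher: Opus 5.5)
Your proof is correct and matches the paper's argument: apply (Comp) at $kd_0$, get $x>0$ from $(x)_{d_0}=1$, and take $V_k(x)$ as the least witness using items (i)--(v) of \cref{lemma_for_coefs_in_expansion}. You are slightly more careful than the paper in checking $P_k(kd_0)$ and in pointing out that item (v), and hence $x>0$, is needed; your variant applying (Comp) at $d_0$ with a case split on $x=0$ is also valid and simply replaces that check with the case distinction.
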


\begin{proof}
    We argue in $\T_k$. Let $d \in P_k$ be such that $\varphi(d, \vv y)$ holds. By (Comp), there exists $x < kd$ such that for every $d' < kd$, $d' \in P_k$, we have $(x)_{d'} = 1$ if $\varphi(d', \vv y)$ and $(x)_{d'} = 0$ otherwise. Since $(x)_d = 1$, it follows that $x > 0$. Let $d_0 := V_k(x)$. By items (iii) and (v) of \cref{lemma_for_coefs_in_expansion}, we have $(x)_{d_0} = 1$, and therefore
    $\varphi(d_0, \vv y)$ holds. Moreover, by \cref{lemma_for_coefs_in_expansion}(iv), we have
    $(x)_{d'} = 0$ for all $d' \in P_k$ with $d' < d_0$, and hence
    $\neg \varphi(d', \vv y)$ for all such $d'$.
\qed\end{proof}

\begin{corollary}\label[corollary]{induction_scheme}
    Let $\varphi(d, \vv y)$ be a $\Delta_0$-formula. Then the formula 
    $$\varphi(\underline{1}, \vv y) \wedge \forallP d \ (\varphi(d, \vv y) \to \varphi(kd, \vv y)) \to \forallP d \ \varphi(d,\vv y)$$
    is provable in $\T_k$.
\end{corollary}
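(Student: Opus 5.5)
We derive the induction principle from the least element principle of \cref{least_element_scheme}, applied to the negated formula. We argue in $\T_k$. Assume $\varphi(\underline{1}, \vv y)$ and $\forallP d\,(\varphi(d, \vv y) \to \varphi(kd, \vv y))$, and suppose towards a contradiction that some $d \in P_k$ satisfies $\neg\varphi(d, \vv y)$. Since $\neg\varphi(d, \vv y)$ is again a $\Delta_0$-formula, \cref{least_element_scheme} yields $d_0 \in P_k$ such that $\neg\varphi(d_0, \vv y)$ holds and $\varphi(d', \vv y)$ holds for every $d' \in P_k$ with $d' < d_0$.

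Because $\varphi(\underline 1, \vv y)$ holds, we have $d_0 \neq \underline 1$. Since $d_0 > 0$, discreteness of the order gives $d_0 > \underline 1$. By \cref{lemma_for_coefs_in_expansion}(vii), there is $d' \in P_k$ with $d_0 = kd'$.

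We next show $d' < d_0$. We have $d_0 = kd' = d' + (k-1)d'$ with $(k-1)d' \geq d' > 0$, since $k \geq 2$ and $d' > 0$. Using (Ord1) together with the monoid and order axioms (Add), this gives $d' < d_0$. It is the only step that needs a small computation.

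By minimality of $d_0$, it follows that $\varphi(d', \vv y)$ holds. The induction hypothesis then gives $\varphi(kd', \vv y)$, that is, $\varphi(d_0, \vv y)$, contradicting the choice of $d_0$. Hence $\forallP d\,\varphi(d, \vv y)$ holds.

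The argument is routine. The one point requiring care is to check that \cref{least_element_scheme} applies to $\neg\varphi$, which it does because $\Delta_0$ is closed under negation.
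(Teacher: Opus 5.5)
Your proof is correct and takes the same route the paper indicates. You apply \cref{least_element_scheme} to the $\Delta_0$-formula $\neg\varphi$, then use \cref{lemma_for_coefs_in_expansion}(vii) to write the minimal counterexample $d_0 > \underline 1$ as $kd'$ with $d' \in P_k$ and $d' < d_0$. You also spell out the small details the paper leaves as ``standard'': discreteness of the order gives $d_0 > \underline 1$, and $d' < d' + (k-1)d'$.
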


\begin{proof}
    The proof is standard; just apply \cref{least_element_scheme} and \cref{lemma_for_coefs_in_expansion}(vii).
\qed\end{proof}

We denote by $x|_d = y$ the following $\BA_k$-formula:
\begin{align*}
    P_k(d) \wedge (y < d) \wedge \exists z \leq x \ \Big ( x= y + z \wedge \big(z = 0 \vee V_k(z)\geq d \big ) \Big ).
\end{align*}

Intuitively, $x|_d$ represents the sum of the first $\log_k d$ summands in the $k$-expansion of $x$, namely
\[
    (x)_1 + (x)_k \cdot k + \dots + (x)_{d/k} \cdot (d/k).
\]
Equivalently, $x|_d$ can be described as the remainder of $x$ modulo $d$.

The following lemma provides us some basic properties of $x|_d$.

\begin{lemma}\label[lemma]{lemma_for_restriction}
    The following are provable in $\T_k$:
    \begin{enumerate}[start=1,label={\rm (\roman*)}]
        \item $\forallP d\ \forallP d' \ (d' \leq d \vee kd \leq d')$;
        \item $\forall x\  \forallP d \ \exists y \leq x \ (x|_d = y)$;
        \item $\forall x\ \forall y\ \forall y'\ \forallP d \  (x|_d = y \wedge x|_d = y' \to y = y')$;
        \item $\forall x\ \forallP d \ (x < d \to x|_d = x)$;
        \item $\forall x \ \forallP d \ \forallP d' < d \ \Big(\bigvee_{a = 0}^{k - 1}\big(  (x)_{d'} = a \wedge (x|_d)_{d'} = a \big)\Big) $.
    \end{enumerate}
\end{lemma}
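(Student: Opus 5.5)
We prove the five items in order, arguing in $\T_k$. The workhorses are \cref{lemma_for_coefs_in_expansion}, axioms (V2)--(V4), and, for the existence claim, the induction scheme \cref{induction_scheme}.

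For (i), fix $d, d' \in P_k$. We apply \cref{induction_scheme} in the variable $d$ to the $\Delta_0$-formula
\[\psi(d,d') :\iff d' \leq d \vee kd \leq d',\]
quantifying over $d'$ inside or keeping it as a parameter. The base case $d = \one$ is immediate, since either $d' = \one$, or $d' > \one$ and then $d' = kd''$ with $d'' \geq \one$ by \cref{lemma_for_coefs_in_expansion}(vii). For the step, assume $\psi(d,d')$ for all $d'$. If $d' \leq d$ then $d' \leq kd$. If $kd \leq d'$ and $kd \neq d'$, then $d' > \one$, so $d' = kd''$ with $d'' \in P_k$ by (vii). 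We then apply the induction hypothesis to $d''$: $d'' \leq d$ would give $d' \leq kd$, a contradiction, so $kd \leq d''$, hence $k^2 d \leq d'$. This needs the monotonicity of $t \mapsto kt$ and its cancellation, which follow from (Add).

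For (ii), I would again use induction on $d$, with the $\Delta_0$ formula $\forall x' \leq x\, \exists y \leq x'\ (x'|_d = y)$, or simply with $x$ as a parameter. For $d = \one$ take $y = 0$, $z = x$. Then either $x = 0$, or $V_k(x) \geq \one$ by \cref{lemma_for_coefs_in_expansion}(i). For the step from $d$ to $kd$, write $x = y + z$ with $y < d$ and either $z = 0$ or $V_k(z) \geq d$. If $z = 0$ or $V_k(z) \geq kd$ (using (i) to compare $V_k(z)$ with $kd$), keep $y$. Otherwise $V_k(z) = d$, and (V3) gives $z = z' + ad$ with $V_k(z') > d$ or $z' = 0$; then $V_k(z') \geq kd$ by (i). Set $y' := y + ad$. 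It remains to show $y' < kd$. This follows from $y < d$ and $ad + d \leq kd$, by monotonicity of addition from (Add). The bound $y \leq x$ is clear from (Ord1).

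For (iii), suppose $x = y + z = y' + z'$ with both decompositions as in the definition, and assume WLOG $y < y'$. Cancellation gives $z = (y'-y) + z'$. By \cref{lemma_for_coefs_in_expansion}(ii) we have $V_k(y'-y) \leq y'-y < d$, while $V_k(z') \geq d$ or $z' = 0$. Hence by (V2) $V_k(z) = V_k(y'-y) < d$, contradicting $V_k(z) \geq d$ (note $z > 0$). The same argument as in \cref{lemma_for_coefs_in_expansion}(v) applies. Item (iv) is immediate: take $z = 0$.

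For (v), let $y = x|_d$ with $x = y + z$. Take $a$ with $(y)_{d'} = a$, witnessed by $y = u + ad' + w$, where $u < d'$ and $w = 0$ or $V_k(w) > d'$. Such an $a$ exists as follows. If $y = 0$, take $a = 0$. If $V_k(y) > d'$, take $a = 0$ with $u = 0$, $w = y$. If $V_k(y) \leq d'$, the natural move is to use the decomposition from (ii) applied to $y$ at $kd'$ and $d'$, and read off the digit of $y|_{kd'}$ above $y|_{d'}$ via (V3). Then $x = u + ad' + (w + z)$. Here $V_k(z) \geq d > d'$ if $z > 0$, so $w + z$ is $0$ or has $V_k > d'$ by (V2) and \cref{lemma_for_coefs_in_expansion}(viii). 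Hence $(x)_{d'} = a$ with the same $a$. The bounds $z \leq x$ follow from (Ord1).

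The main obstacle is the existence of digits in (v): showing that every $y$ has some $a$ with $(y)_{d'} = a$. I would handle this by the same induction as in (ii), strengthened to also produce the digit at each position.
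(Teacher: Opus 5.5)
Your proposal is correct and follows essentially the paper's route: induction on $d \in P_k$ for (i) and (ii), the cancellation-plus-(V2) argument for (iii), and the decomposition $x = u + ad' + (w+z)$ with \cref{lemma_for_coefs_in_expansion}(viii) for (v). You are more explicit than the paper about using (i) to get $V_k(z') \geq kd$ and $y + ad < kd$ in (ii), and about the existence of the digit $(y)_{d'}$ in (v), which follows directly from (ii) at $d'$ plus (V3) with no extra induction. The one point to tighten is (i): since you apply the hypothesis to $d''$, $d'$ cannot be a mere parameter, and an unbounded $\forall d'$ is not $\Delta_0$. So induct on $\forall^{P_k} d' \leq D\,(d' \leq d \vee kd \leq d')$ for a fixed $D$, or, as the paper does, on $\forall^{P_k} d' < kd\ (d' \leq d)$.
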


\begin{proof}
    We argue in $\T_k$.
    \begin{enumerate}[(i)]
        \item Using \cref{induction_scheme}, we prove by induction on $d \in P_k$ the $\Delta_0$-formula
        $$\forallP d' < kd \ (d' \leq d).$$
        The base case $d = \one$ is immediate. For the inductive step, assume $\forallP d' < kd \ (d' \leq d)$, and let us show that $\forallP d' < k^2d \ (d' \leq kd)$. Suppose, towards a contradiction, that there exists $d' \in P_k$ such that $kd < d' < k^2 d$. By \cref{lemma_for_coefs_in_expansion}(vii), there exists $d'' \in P_k$ such that $d' = k d''$. This implies $d < d'' < kd$, contradicting the induction hypothesis. By \cref{induction_scheme}, the desired statement follows.

        \item Fix $x$. We prove by induction on $d \in P_k$ the $\Delta_0$-formula
        $$\exists y \leq x \ (x|_d = y).$$
        The base case $d = \one$ is immediate: take $y = 0$ and $z = x$ in the definition of $x|_d = y$. For the inductive step, assume $\exists y \leq x \ (x|_d = y)$ and let us prove $\exists y' \leq x \ (x|_{kd} = y')$. Fix $y \leq x$ such that $x|_d = y$. By the definition of $x|_d = y$, there exists $z \leq x$ such that $x = y + z$ and either $z = 0$ or $V_k(z) \geq d$. If $z = 0$, we may take $y' = y$, and we are done. Suppose $V_k(z) \ge d$. Then $z > 0$. By (V3) together with \cref{lemma_for_coefs_in_expansion}(iv), there exist $z'$ and $a \in \{0, \dots, k - 1\}$ such that $z = a d + z'$ and $z' = 0$ or $V_k(z') > d$. Now one can take $y' = y + ad$ and $z'$ as witnesses that $x|_{k d} = y'$.

        \item Fix $x, y, y'$ and $d$ such that $x|_d = y$ and $x|_d = y'$. Then there exist $z, z' \leq x$ such that 
        $$x= y + z \wedge \big(z = 0 \vee V_k(z)\geq d \big)\quad\text{and}\quad x= y' + z' \wedge \big(z' = 0 \vee V_k(z')\geq d \big).$$ 
        Without loss of generality, assume that $y \leq y'$. Then $z' \leq z$ and $y' - y = z - z'$. Suppose $y' - y \ne 0$. Then $0 < y' - y \leq y' < d$, and hence $V_k(y' - y) \leq y' - y < d$. We claim that $V_k(z - z') \ge d$. Indeed, if $V_k(z - z') < d$, then by (V2), $V_k(z) = V_k((z - z') + z') = V_k(z - z') < d$, a contradiction. Hence $V_k(z - z') \geq d$, contradicting $V_k(y' - y) < d$. Therefore $y' - y = 0$ and $y = y'$.

        \item This follows immediately from the definition of $x|_d = y$ by taking $z = 0$.

        \item Fix $x$, $d \in P_k$, and $d' \in P_k$ such that $d' < d$. By items (ii) and (iii), there exist unique $y$ and $z$ such that $x = y + z$, $y < d$ and $z = 0$ or $V_k(z) \geq d$, that is, $x|_d = y$. By \cref{lemma_for_coefs_in_expansion}, there exists a unique $a \in \{0, \dots, k - 1\}$ such that $(y)_{d'} = a$. Thus we can write $y = t + ad' + u$ for some $t < d'$ and with $u = 0$ or $V_k(u) > d'$. We claim that $(x)_{d'} = a$. Indeed, $x = y + z = t + ad' + u + z$. It is sufficient to show that $u + z = 0$ or $V_k(u + z) > d'$. The cases of $u = 0$ or $z = 0$ are immediate. Otherwise, assume $u, z > 0$, that is, $V_k(u) > d'$ and $V_k(z) \geq d > d'$. By \cref{lemma_for_coefs_in_expansion}(viii), it follows that $V_k(u + z) > d'$, as required.
    \end{enumerate}
\qed\end{proof}

The rest of this section is organized as follows. First, we express runs of automata in the language of $\BA_k$ and prove some natural facts about automata, formalized in $\T_k$ (\cref{lemma_for_automaton_constructions}). Next, we prove in $\T_k$ that the standard DFAs for $0$, $S$, $+$, $V_k$, and $\leq$ recognize the corresponding predicates (\cref{lemma_for_basic_automata} and \cref{corollary_base_case}). These results imply the formalized version of \cref{theorem_Buchi_Bruyere}, namely \cref{theorem_Buchi_formalized}, which is crucial for us. After these steps, one can easily derive \cref{main_theorem}. Now let us proceed with this plan.

Denote $\Sigma_k := \{0, \dots, k - 1\}$. Let $A = (\Sigma_k^m, Q, q_0, F, \Delta)$ be a finite automaton,
where $\Delta \subseteq Q \times \Sigma_k^m \times Q$
is the set of transitions.
For any $q_1 \in Q$, we define the following $\Delta_0$-formula $W_{A, q_1}(x_1, \dots, x_m, d)$:
\begin{align*}
     P_k(d) \wedge &\exists (w_q)_{q \in Q} < kd  \ \Big(  \forallP d' \leq d \bigvee_{q \in Q} \big((w_q)_{d'} = 1 \wedge \bigwedge_{q' \ne q} ((w_{q'})_{d'} = 0)\big) \wedge  \\
     &\big((w_{q_0})_{\one} = 1\big) \wedge \big((w_{q_1})_{d} = 1\big) \wedge\\
     &\forallP d' < d  \bigwedge_{q \in Q}\big((w_q)_{d'} = 1 \to \bigvee_{(q, \vv a, q') \in \Delta} \big((w_{q'})_{k d'} = 1 \wedge \bigwedge_{i = 1}^m (x_i)_{d'} = a_i\big)\big)
    \Big).
\end{align*}

Here, by $\exists (w_q)_{q \in Q} < kd \dots$ we mean a block of $|Q|$ bounded existential quantifiers. This formula expresses that the automaton $A$
can reach the state $q_1$ by reading the $k$-expansion of
$(x_1,\dots,x_m)$ in exactly $\log_k d$ steps. The numbers $w_q$ encode a run of the automaton: the condition
$(w_q)_{d'} = 1$ means that the state at step $\log_k d'$ of the run is $q$.
The first row of the formula ensures that $(w_q)_{q \in Q}$ encodes a
sequence of exactly $\log_k d + 1$ states, with exactly one active state
at each position.
The second row specifies that the run starts in the initial state $q_0$
and ends in the state $q_1$.
Finally, the third row guarantees that all transitions along the run are
consistent with the transition relation $\Delta$ and with the digits of
the $k$-expansion of $(x_1,\dots,x_m)$.

\begin{lemma}\label[lemma]{lemma_for_automaton_constructions}
    Let $A = (\Sigma_k^m, Q, q_0, F, \Delta)$ and $A' = (\Sigma_k^m, Q', q_0', F', \Delta')$ be finite automata. Then the following are provable in $\T_k$:
    \begin{enumerate}[start=1,label={\rm (\roman*)}]
        \item $\forall \vv x \ \forallP d \ \bigwedge_{q \in Q, q' \in Q'} \Big( W_{A \times A', (q, q')}(\vv x, d) \leftrightarrow W_{A, q}(\vv x, d) \wedge W_{A', q'} (\vv x, d) \Big)$;
        \item $\forall \vv x \ \forallP d \ \bigwedge_{q \in Q} \Big(W_{A, q}(\vv x, d) \leftrightarrow \bigvee\limits_{H \subseteq Q, q \in H} W_{\det(A), H}(\vv x, d) \Big)$;
        \item $\forall \vv x \ \forallP d \ \bigwedge_{q \in Q} \Big(W_{\pr(A), q}(\vv x, d) \leftrightarrow \exists y < d \ W_{A, q}(\vv x, y, d) \Big)$;
        \item[{\rm (iii')}] $ \forall \vv x \ \forallP d \ \bigwedge_{q \in Q} \Big(W_{\pr(A), q}(\vv x, d) \leftrightarrow \exists y \ W_{A, q}(\vv x, y, d) \Big)$;
        \item $ \forall \vv x \ \forall \vv y \ \forallP d \ \bigwedge_{q \in Q} \Big(W_{A, q}(\vv x, d) \leftrightarrow W_{A^{+n}, q}(\vv x, \vv y, d) \Big)$;
    \end{enumerate}
    Moreover, if $A$ is a DFA, then the following is provable in $\T_k$:
    \begin{enumerate}[\rm(v)]
        \item $ \forall \vv x \ \forallP d \ \bigvee_{q \in Q}  \Big(W_{A, q}(\vv x, d) \wedge \bigwedge_{q' \ne q} \neg W_{A, q'}(\vv x, d) \Big)$.
    \end{enumerate}
\end{lemma}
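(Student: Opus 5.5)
The common tool in all items is the comprehension scheme (Comp): it lets us build or merge run-codes $w_q < kd$ from $\Delta_0$ conditions on positions $d' \le d$. Its consequences, induction on $P_k$ (\cref{induction_scheme}) and the restriction operation $x|_d$ (\cref{lemma_for_restriction}), will carry the inductive arguments. I will also repeatedly use the uniqueness of digits, \cref{lemma_for_coefs_in_expansion}(iii)--(v). Working in $\T_k$, I treat the items in the order (iv), (i), (iii), (iii'), (ii), (v).

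Item (iv) is immediate. The transitions of $A^{+n}$ differ from those of $A$ only by unconstrained extra digits, so the same witnesses $w_q$ serve in both directions. For (i), the direction from left to right projects a product run. Given $w_{(q,q')}$, I use (Comp) with bound $kd$ to define $u_q$ with $(u_q)_{d'} = 1$ iff $\bigvee_{q'} (w_{(q,q')})_{d'} = 1$, and similarly $u'_{q'}$. Each condition of $W_A$ then follows pointwise, using (Comp) and digit uniqueness. For the converse, I define $w_{(q,q')}$ by $(w_{(q,q')})_{d'} = 1 \leftrightarrow (u_q)_{d'} = 1 \wedge (u'_{q'})_{d'} = 1$. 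One point needs care: (Comp) produces codes below $d$, so I apply it at $kd$ to cover position $d$. Since $d' \le d < kd$, this only needs \cref{lemma_for_restriction}(i).

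For (iii), the run-code $w$ and the last digits are handled together. Suppose $W_{\pr(A),q}(\vec x,d)$ holds with codes $w$. Each step $d' < d$ has some transition of $A$ whose label projects to the digits of $\vec x$, and I choose its last digit $b(d')$ canonically, say the least one available given the current and next state. Then I build $y$ digit by digit. For each $b \in \{1,\dots,k-1\}$, (Comp) gives $y_b < d$ with $(y_b)_{d'} = 1$ iff $b(d') = b$, and I set $y = \sum_b b\, y_b$. I then check that $(y)_{d'} = b(d')$. This is a digit computation by induction on $P_k$, or via $x|_d$, and I expect it to be the most tedious part. The converse of (iii) just forgets the $y$-digits. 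For (iii'), I replace an arbitrary witness $y$ by $y|_d$, which exists by \cref{lemma_for_restriction}(ii). By \cref{lemma_for_restriction}(v) it has the same digits below $d$, so $W_{A,q}$ still holds for it.

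Item (ii) will be the main obstacle. For the direction from right to left, I prove by induction on $P_k$ (\cref{induction_scheme}) a $\Delta_0$ statement with parameters. For the direction from left to right, I set $H_{d'} = \{q : W_{A,q}(\vec x, d')\}$, where each $W_{A,q}(\vec x, d')$ is $\Delta_0$ in $d'$. (Comp) then defines the codes $v_H$ with $(v_H)_{d'} = 1$ iff $\bigwedge_{q\in H} W_{A,q}(\vec x,d') \wedge \bigwedge_{q\notin H}\neg W_{A,q}(\vec x,d')$. The key claim is that these form a run of $\det(A)$, namely $H_{kd'} = \delta(H_{d'}, \vec x_{d'})$.
\begin{itemize}
\item For $\supseteq$, extend a run to $d'$ by one step; this is again a (Comp) patching argument.
\item For $\subseteq$, truncate a run to $kd'$ at $d'$, using (Comp) with bound $kd'$ restricted to positions $\le d'$.
\end{itemize}
With the claim in hand, the right-to-left direction follows from the uniqueness part of (v) applied to $\det(A)$: any run of $\det(A)$ ending in $H$ must coincide with this canonical one, so $q \in H$ gives $W_{A,q}$.

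Item (v) is proved in two halves. Existence is an induction on $P_k$ of $\bigvee_q W_{A,q}(\vec x, d)$, extending a run by one step via (Comp), with the existence of the next state given by totality of $\delta$. Uniqueness goes as follows. If two runs $w, w'$ end in different states, apply \cref{least_element_scheme} to the $\Delta_0$ formula "some $q$ has $(w_q)_{d'} \ne (w'_q)_{d'}$". The least disagreement is not at $\underline 1$, so by \cref{lemma_for_coefs_in_expansion}(vii) it has the form $kd''$ with agreement at $d''$. Determinism then yields a contradiction.
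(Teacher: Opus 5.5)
Your argument is correct, but in most items it takes a different route from the paper.

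The paper proves every item by a plain induction on $d \in P_k$ (\cref{induction_scheme}), using the stated equivalence itself as the induction hypothesis. Each inductive step has the same shape: truncate a run of length $kd$ to one of length $d$, apply the hypothesis, then extend by one step. The appendix proof of (iii) illustrates this; there the witness is simply extended from $y$ to $y + bd$.

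You instead use (Comp) to build run codes globally. This has real advantages in places. It makes (i) and (iv) induction-free, by merging and splitting codes pointwise at bound $kd$ via \cref{lemma_for_restriction}(i), which is arguably cleaner. Your least-element argument on the first disagreement is also a neat alternative for the uniqueness half of (v).

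The cost shows up in (ii) and (iii).
\begin{itemize}
\item In (ii), the equivalence $W_{A,q} \leftrightarrow \bigvee_{H \ni q} W_{\det(A),H}$ is itself preserved by one truncation/extension step. So induction on $d$ needs neither a canonical run nor uniqueness of runs. Your key claim $H_{kd'} = \delta(H_{d'}, \vv a)$ already uses exactly those two steps. On top of that you add comprehension over the predicates $W_{A,q}(\vv x, d')$ and the uniqueness half of (v) for $\det(A)$. Two consequences: (v) has to be proved before (ii), not after as in your ordering; and the unspecified ``induction'' you announce for the right-to-left direction is superseded by the uniqueness argument.
\item In (iii), you choose $b(d')$ globally and assemble $y = \sum_b b\,y_b$. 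This replaces the one-line extension $y+bd$ with a carry-free digit computation, which itself needs an induction over $P_k$. That computation must also yield $y < d$ for the bounded quantifier. Alternatively, you can just pass to $y|_d$, as you do for (iii').
\end{itemize}
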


\begin{proof}
    The proof of (i), (ii), (iii), (iv), (v) is straightforward. Fix $\vv x$ (and $\vv y$ in (iv)) and apply induction on $d$. Statement (iii') follows from (iii), since 
    $$W_{A, q}(\vv x, y, d) \leftrightarrow W_{A, q}(\vv x, y|_d, d)$$
    is provable in $\T_k$ by \cref{lemma_for_restriction}(v). We prove (iii) as an illustration in Appendix \ref{appendix_1}.
\qed\end{proof}

Now construct DFAs $A_0$, $A_S$, $A_+$, $A_{V_k}$, and $A_\leq$ recognizing the predicates
$$x = 0, \quad S(x) = y, \quad x + y = z, \quad V_k(x) = y, \quad x \leq y,
$$
respectively (see below). Accepting states are indicated by double circles. In each automaton, there is an implicit sink state $q_{\mathrm{sink}}$ required to make the transition function total (it is omitted from the diagrams to save space). Unless stated otherwise, $a$ and $b$ range over $\{0, \dots, k - 1\}$.

\tikzset{
  sink/.style={
    state,
    dashed,
    draw,
    minimum size=18pt
  },
  sink edge/.style={
    dashed
  }
}

\underline{Automaton $A_0$}: \ \ \ \ \ \ \ \ \ \ \ \ \ \ \ \underline{Automaton $A_S$}:

\begin{tikzpicture}[->,>=Stealth,auto,node distance=3cm]
  \node[state,initial,accepting] (q0) {$q_0$};

  \path
    (q0) edge[loop above] node {\scriptsize $0$} ();

  \node[state,initial,right=3cm of q0] (q00) {$q_0$};
  \node[state,accepting,right=4cm of q00] (q11) {$q_1$};

  \path
    (q00) edge[loop above] node {\scriptsize $(k-1,0)$} ()
         edge[bend left=15] node[above] {\scriptsize $(a,a+1),\ a \in \{0, \dots, k - 2\}$} (q11)

    (q11) edge[loop above] node {\scriptsize $(a,a)$} ();
\end{tikzpicture}

\underline{Automaton $A_+$}:

\begin{tikzpicture}[->,>=Stealth,auto,node distance=4.5cm]
  \node[state,initial,accepting] (q0) {$q_0$};
  \node[state,right=7cm of q0] (q1) {$q_1$};

  \path
    (q0) edge[loop above] 
          node {\scriptsize $(a,b,a+b),\ a+b<k$} ()
         edge[bend left=18] 
          node[above] {\scriptsize$(a,b,a+b-k),\ a+b\ge k$} (q1)

    (q1) edge[loop above] 
          node {\scriptsize$(a,b,a+b+1-k),\ a+b+1\ge k$} ()
         edge[bend left=18] 
          node[below] {\scriptsize$(a,b,a+b+1),\ a+b+1<k$} (q0);

\end{tikzpicture}

\underline{Automaton $A_{V_k}$}: \ \ \ \ \ \ \ \ \ \ \ \ \ \ \ \ \ \ \ \ \ \ \ \ \ \ \ 
\underline{Automaton $A_\leq$}:

\begin{tikzpicture}[->,>=Stealth,auto,node distance=4cm]
  \node[state,initial,accepting] (q0) {$q_0$};
  \node[state,accepting,right=2cm of q0] (q1) {$q_1$};

  \path
    (q0) edge[loop above] node {\scriptsize $(0,0)$} ()
         edge[bend left=15] node[above, align=left] {\scriptsize $(a,1), \ a > 0$} (q1)

    (q1) edge[loop above] node {\scriptsize $(a,0)$} ();


  \node[state,initial,accepting, right=2cm of q1] (q00) {$q_0$};
  \node[state,right=2cm of q00] (q11) {$q_1$};

  \path
    (q00) edge[loop above] node {\scriptsize $(a,b),\ a \leq b$} ()
         edge[bend left=18] node[above] {\scriptsize $(a,b),\ a>b$} (q11)

    (q11) edge[loop above] node {\scriptsize $(a,b),\ a\ge b$} ()
         edge[bend left=18] node[below] {\scriptsize $(a,b),\ a<b$} (q00);
\end{tikzpicture}

\begin{lemma}\label[lemma]{lemma_for_basic_automata} The following are provable in $\T_k$:
    \begin{enumerate}[start=1,label={\rm (\roman*)}]
        \item $ \forall x \ \forallP d \ \Big(x|_d = 0 \leftrightarrow W_{A_0, q_0}(x, d) \Big)$;
        \item
        $ \forall  x\  \forall y \ \forallP d  \Big ( \big(x|_d = d - 1 \wedge y|_d = 0 \leftrightarrow W_{A_S, q_0}(x, y, d) \big) \wedge \big (S(x|_d) = y|_d \leftrightarrow W_{A_S, q_1}(x, y, d)\big) \Big)$;

        \item $ \forall  x \ \forall y \ \forall z \ \forallP d \ \Big ( \big(x|_d + y|_d = z|_d \leftrightarrow W_{A_+, q_0}(x, y, z, d) \big) \wedge \big(x|_d + y|_d = z|_d + d \leftrightarrow W_{A_+, q_1}(x, y, d) \big) \Big)$;

        \item $ \forall x \ \forall y \ \forallP d \ \Big ( \big(x|_d = 0 \wedge y|_d = 0 \leftrightarrow W_{A_{V_k}, q_0}(x, y, d) \big) \wedge \big(V_k(x|_d) = y|_d \leftrightarrow W_{A_{V_k}, q_1}(x, y, d) \big) \Big)$;

        \item $ \forall  x \ \forall y \ \forallP d \ \Big ( \big(x|_d \leq y|_d \leftrightarrow W_{A_{\leq}, q_0}(x, y, d) \big) \wedge  \big(x|_d > y|_d \leftrightarrow W_{A_{\leq}, q_1}(x, y, d) \big) \Big)$.
    \end{enumerate}
\end{lemma}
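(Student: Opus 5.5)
We prove each of (i)--(v) by induction on $d \in P_k$, using \cref{induction_scheme}. The variables $x, y, z$ are fixed as parameters. The induction formula is the conjunction of the two biconditionals in the relevant item. This formula is $\Delta_0$: $W_{A,q}$ is $\Delta_0$, and each occurrence of $x|_d$ can be replaced by a bounded quantifier $\exists u \leq x\,(x|_d = u \wedge \dots)$, which is justified by \cref{lemma_for_restriction}(ii),(iii).

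For the base case $d = \one$, \cref{lemma_for_restriction}(iv) together with the definition of $x|_d$ gives $x|_{\one} = 0$ for every $x$. Hence the arithmetic sides reduce to statements such as $0 = 0$, $0 + 0 = 0$, or $0 \le 0$, which hold exactly for the state reached after reading the empty word. On the automaton side, $W_{A, q}(\vv x, \one)$ holds iff $q = q_0$: the run consists of a single position, and the only $d' \leq \one$ with $P_k(d')$ is $\one$. For $A_S$ and $A_{V_k}$ the base clause for $q_0$ is fine, since $x|_{\one} = \one - 1 = 0$ and $y|_{\one}=0$, and the $q_1$ clause must fail. For $A_S$ it fails because $S(0) \ne 0$. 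For $A_{V_k}$ the clause $V_k(x|_d) = y|_d$ holds trivially at $d=\one$ but $W_{A_{V_k},q_1}$ fails, so the $q_1$ clause must be read as $V_k(x|_d)=y|_d \wedge x|_d > 0$. The $q_0$ and $q_1$ cases of $A_{V_k}$ are then disjoint, and I would prove this corrected form, which is presumably what is intended.

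For the inductive step from $d$ to $kd$, I first establish a ``one-step unfolding'' of $W$, proved once for an arbitrary automaton:
$$W_{A, q'}(\vv x, kd) \leftrightarrow \bigvee_{(q, \vv a, q') \in \Delta} \Big( W_{A,q}(\vv x, d) \wedge \bigwedge_i (x_i)_d = a_i\Big).$$
The left-to-right direction truncates the run codes $w_q$ to $w_q|_{kd}$, using \cref{lemma_for_restriction}(v). The right-to-left direction extends a run by adding $d'' := kd$ to the code $w_{q'}$, where $d''$ is the new digit position. The key arithmetic fact is the digit decomposition
$$x|_{kd} = x|_d + (x)_d\cdot d,$$
which follows from the proof of \cref{lemma_for_restriction}(ii) and from uniqueness (iii).

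With these two facts, each inductive step becomes a finite case analysis over the digits $a,b,c \in \Sigma_k$ at position $d$. Take $A_+$ as an example. Write $x|_{kd} = x|_d + ad$, and similarly for $y$ and $z$. Then $x|_{kd} + y|_{kd} = z|_{kd}$ (respectively $= z|_{kd} + kd$) is equivalent to a disjunction over the previous carry $\epsilon \in \{0,1\}$: namely, $x|_d + y|_d = z|_d + \epsilon d$ together with $a + b + \epsilon = c$ (respectively $= c + k$). This matches the transitions of $A_+$.

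The nontrivial direction here is deriving the digit equation from the sum equation. Here I use the bounds $x|_d, y|_d, z|_d < d$, so that $x|_d + y|_d < 2d \le kd$, together with uniqueness of digits from \cref{lemma_for_coefs_in_expansion}(v) and the uniqueness of restriction. Concretely, $x|_d + y|_d$ is $<2d$, so its restriction to $d$ determines $z|_d$ and the carry. Then the coefficient at $d$ is forced, by (V1) and (V2) and by comparing coefficients with $(\cdot)_d$.

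The cases $\le$, $S$, and $V_k$ are analogous. For $V_k$, the needed facts are as follows. If $x|_d = 0$ and $a > 0$, then $V_k(ad) = d$, by (V1). If $x|_d > 0$, then $V_k(x|_d + ad) = V_k(x|_d)$, by (V2), since $V_k(x|_d) \le x|_d < d$.

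I expect the main obstacle to be the careful $\T_k$-verification of ``comparison of coefficients'' in the inductive steps for $A_+$ and $A_\le$, in particular proving $u + ad = v + bd$ with $u,v<d \Rightarrow u = v \wedge a = b$, and its carry variant. I would first isolate this as a separate lemma, derived from \cref{lemma_for_coefs_in_expansion}(v) and \cref{lemma_for_restriction}(iii), and then apply it uniformly.
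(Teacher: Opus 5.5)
Your proposal is correct and follows the paper's route: induction on $d \in P_k$ via \cref{induction_scheme}, the decomposition $x|_{kd} = x|_d + (x)_d\, d$, and a digit-by-digit case analysis matched against the transitions. The only differences are that the paper uses your one-step unfolding of $W$ implicitly, and it pins down the new digit by direct inequality comparisons rather than a separate coefficient-comparison lemma.

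Your correction of (iv) is also right. As printed, the second biconditional fails in $\N$ whenever $x|_d = y|_d = 0$ (e.g.\ at $d = \one$), so it is not provable. The $q_1$ clause must read $x|_d > 0 \wedge V_k(x|_d) = y|_d$, which is exactly what \cref{corollary_base_case}(iv) needs.
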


\begin{proof}
    The proof is standard and proceeds by induction on $d$. For an illustration, see the proof of (iii) in Appendix \ref{appendix_2}.
\qed\end{proof}

Next, we want to get rid of the quantifiers on $d$ in the formulas from \cref{lemma_for_basic_automata}. In order to do this, let us introduce the function $g_n : \mathbb{N}^n \to \mathbb{N}$ that maps $(x_1, \dots, x_n)$ to the least power of $k$ strictly greater than each of $x_1, \dots, x_n$. Clearly, the graph of $g_n(x_1, \dots, x_n) = d$ is definable by the quantifier-free formula
$$\Big( \bigwedge_{i = 1}^n (x_i = 0) \wedge d = \one \Big) \vee \Big(P_k(d) \wedge \bigvee_{i = 1}^n \big( \bigwedge_{j = 1}^n (x_i \geq x_j) \wedge (x_i < d) \wedge (d \leq k x_i) \big) \Big).$$
Hence all occurrences of $g_n$ can be eliminated using a bounded quantifier. We will write simply $g$ instead of $g_n$ when the arity is clear from the context. Note that by \cref{lemma_for_restriction}(iv) we have $$\T_k \vdash \forall \vv x \ \forallP d \ (d \geq g(\vv x) \to \bigwedge_{i = 1}^n (x_i|_d = x_i)).$$

\begin{corollary}\label[corollary]{corollary_base_case} The following are provable in $\T_k$:
    \begin{enumerate}[start=1,label={\rm (\roman*)}]
        \item $\forall x \ \Big (x = 0 \leftrightarrow W_{A_0, q_0}\big(x, g(x)\big) \Big)$;

        \item  $ \forall x \ \forall y \ \Big(S(x) = y \leftrightarrow  W_{A_S, q_1}\big(x, y, g(x, y)\big)\Big)$;

        \item $\forall x \ \forall y \ \forall z \ \Big(x + y = z \leftrightarrow W_{A_+, q_0}\big(x, y, z, g(x, y, z)\big)\Big)$;

        \item  $\forall x \ \forall y \ \Big(V_k(x) = y \leftrightarrow \bigvee\limits_{i = 1, 2}W_{A_{V_k}, q_i}\big(x, y, g(x, y)\big) \Big)$;

        \item  $\forall x \ \forall y \ \Big(x \leq y \leftrightarrow W_{A_{\leq}, q_0}\big(x, y, g(x, y)\big)\Big)$.
    \end{enumerate}
\end{corollary}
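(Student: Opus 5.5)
The plan is to instantiate each item of \cref{lemma_for_basic_automata} at $d := g(\vv x)$ and remove the restrictions $x_i|_d$ using the remark preceding the corollary. In $\T_k$, $g(\vv x)$ is a power of $k$ strictly above every argument. Existence of $g(\vv x)$ is obtained from (V4) applied to $\max(\vv x)$ (together with (V0) when all arguments are $0$), taking $kd$ where $d \le \max < kd$. Uniqueness comes from \cref{lemma_for_restriction}(i). Whenever $d \ge g(\vv x)$, the remark gives $x_i|_d = x_i$ for every $i$.

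With this in hand, items (i), (iii) and (v) follow immediately: substitute $d = g(\vv x)$ into the first conjunct of \cref{lemma_for_basic_automata}(i), (iii), (v) and replace $x|_d$, $y|_d$, $z|_d$ by $x, y, z$. For (iii) the one point to check is that the arity of $g$ matches: we use $g(x,y,z)$, so all three restrictions are trivial.

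For (ii), take $d = g(x,y)$, so that $x|_d = x$ and $y|_d = y$. The second conjunct of \cref{lemma_for_basic_automata}(ii) then reads $S(x) = y \leftrightarrow W_{A_S,q_1}(x,y,d)$, which is exactly what we need. For (iv), with $d = g(x,y)$, \cref{lemma_for_basic_automata}(iv) gives
\[ W_{A_{V_k},q_0} \vee W_{A_{V_k},q_1} \;\leftrightarrow\; (x=0 \wedge y=0) \vee V_k(x) = y. \]
By (V0), $x = y = 0$ implies $V_k(x) = y$, so the disjunction collapses to $V_k(x) = y$. Here I read the indices $i = 1,2$ in the statement as ranging over the two accepting states $q_0, q_1$ of $A_{V_k}$.

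The only step needing care is making the $\Delta_0$-elimination of $g$ legitimate. This means proving in $\T_k$ that the defining formula of $g_n$ is total and functional, using (V4), \cref{lemma_for_restriction}(i), and \cref{lemma_for_coefs_in_expansion}(i). It also means checking that $d = g(\vv x) > x_i$ really yields $x_i|_d = x_i$ via \cref{lemma_for_restriction}(iv). I expect the rest to be bookkeeping.
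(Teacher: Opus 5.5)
Your proposal is correct and follows the paper's proof, which is exactly ``substitute $d := g(\vv x)$ into \cref{lemma_for_basic_automata}'' and use $x_i|_{g(\vv x)} = x_i$. Your additional checks are details the paper leaves implicit, and they go through as you describe. The first is that the defining formula of $g$ is total and functional in $\T_k$ (via (V4), (V1), (V0) and \cref{lemma_for_restriction}(i)), so that $P_k(g(\vv x))$ holds and the instantiation is legitimate. The second is that in (iv) the disjunct $x = 0 \wedge y = 0$ is absorbed into $V_k(x) = y$ by (V0).
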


\begin{proof}
    Substitute $g(\dots)$ for $d$ in \cref{lemma_for_basic_automata}.
\qed\end{proof}

The following result is crucial for us.

\begin{theorem}\label[theorem]{theorem_Buchi_formalized}
    For any formula $\varphi(\vv x)$, there exists a DFA $A$ such that
    $$\T_k \vdash \forall \vv x \ \Big(\varphi(\vv x) \leftrightarrow \bigvee_{q \in F} W_{A, q}\big(\vv x, g(\vv x) \big) \Big).$$
    Here, $F$ denotes the set of accepting states of $A$.
\end{theorem}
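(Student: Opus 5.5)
We prove the statement by induction on the construction of $\varphi$, after a preliminary reduction. First, we rewrite $\varphi$ (provably in pure logic) so that every atomic subformula has the form $x = 0$, $S(x) = y$, $x + y = z$, $V_k(x) = y$, $x \leq y$, or $x = y$, with variables only. Nested terms are unravelled by fresh existentially quantified variables, e.g.\ $\psi(t_1 + t_2)$ becomes $\exists u \exists v \exists w\,(t_1 = u \wedge t_2 = v \wedge u + v = w \wedge \psi(w))$. Equality $x = y$ is handled as $x \leq y \wedge y \leq x$ via the antisymmetry contained in (Add). The atomic base cases are then exactly \cref{corollary_base_case}. We may freely replace the automata there by their determinizations, using \cref{lemma_for_automaton_constructions}(ii) to transfer the acceptance condition. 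We also pad them with dummy coordinates via \cref{lemma_for_automaton_constructions}(iv), so that all automata in the induction read the same tuple $\vv x$ of free variables.

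The key technical point is that $g(\vv x)$ must be replaceable by any larger power of $k$. So we strengthen the induction hypothesis as follows: for every $\varphi(\vv x)$ there is a DFA $A$ such that
\[
\T_k \vdash \forall \vv x\ \forallP d\ \Big(d \geq g(\vv x) \to \big(\varphi(\vv x) \leftrightarrow \textstyle\bigvee_{q \in F} W_{A,q}(\vv x, d)\big)\Big).
\]
For the basic automata this follows from \cref{lemma_for_basic_automata} together with $x_i|_d = x_i$ for $d \geq g(\vv x)$. One must check that the extra states (e.g.\ $q_0$ of $A_+$, meaning no carry) behave correctly; here one uses that $x|_d + y|_d = z|_d + d$ is impossible once $d$ bounds $x,y,z$. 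Padding with extra variables also needs this strengthened form: a variable $y$ not in $\varphi$ may enlarge $g$, and the padded automaton reads extra zero digits.

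The inductive steps go as follows.
\begin{itemize}
\item \emph{Conjunction.} Use the product automaton and \cref{lemma_for_automaton_constructions}(i) with accepting set $F \times F'$.
\item \emph{Negation.} Use $\overline{A}$ for a DFA $A$, since \cref{lemma_for_automaton_constructions}(v) gives that exactly one $W_{A,q}$ holds.
\item \emph{Existential quantifier $\exists y\,\varphi(\vv x, y)$.} Take $\det(\pr(A))$. By \cref{lemma_for_automaton_constructions}(iii'), $\bigvee_{q \in F} W_{\pr(A),q}(\vv x, d)$ is equivalent to $\exists y\, \bigvee_{q\in F} W_{A,q}(\vv x, y, d)$. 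The hypothesis only applies when $d \geq g(\vv x, y)$, however.
\end{itemize}

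The existential step is the main obstacle. A witness $y$ may be much larger than $d$, and conversely a $y$ with $d \ge g(\vv x,y)$ read off the run need not satisfy $\varphi$ unless the run to length $d$ certifies it. The standard fix is to saturate the accepting set. Replace $F$ by the set $F^\ast$ of states from which $F$ is reachable by reading only the zero letter on the $\vv x$-coordinates, with arbitrary letters on the $y$-coordinate; this is the usual treatment of trailing zeros. We then need two claims, each proved in $\T_k$ by \cref{induction_scheme}:
\begin{itemize}
\item a run of length $d$ reaching $F^\ast$ extends to a run of length $k^j d$ reaching $F$, reading zeros on $\vv x$ (the bound $j \le |Q|$ is a standard numeral);
\item conversely, given $y$ with $\varphi(\vv x, y)$, the run of $A$ on $(\vv x, y)$ to length $g(\vv x, y) \ge d$ restricts to length $d$ and lands in $F^\ast$, using $y|_d$ and \cref{lemma_for_restriction}(v) as in (iii').
\end{itemize}
Concatenating and splitting runs, via the $w_q$ codings and $x|_d$, is where (Comp) is genuinely used. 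After this, we determinize again to keep the DFA form, and the theorem follows by substituting $d = g(\vv x)$.
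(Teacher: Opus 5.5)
Your proposal is correct and has the same skeleton as the paper: simple atomic formulas via \cref{corollary_base_case}, complement via part (v), product after padding via parts (i) and (iv), and $\det(\pr(A))$ via parts (ii) and (iii') of \cref{lemma_for_automaton_constructions}. You depart from it exactly where the paper's one-line argument is not enough.

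The paper keeps the hypothesis in the literal form ``at $d = g(\vv x)$''. For $\exists y\,\psi$ it takes $\det(\pr(A))$ with the unsaturated acceptance condition $H \cap F \neq \emptyset$. But part (iii') only compares runs of one and the same length $d$, whereas the hypothesis for $\psi(\vv x, y)$ concerns length $g(\vv x, y)$, which may exceed $g(\vv x)$. This genuinely fails. Take $\exists y\,(S(x) = y)$ and $x = k-1$: then $g(x) = k$, and $\pr(A_S)$ reading the single digit $k-1$ reaches only $q_0$ and $q_{\mathrm{sink}}$, never $q_1$, although the formula is true.

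Your two changes together are the standard trailing-zeros argument the paper omits. The first is the strengthened hypothesis, correctness at every $d \in P_k$ with $d \geq g(\vv x)$; this is also what makes padding by dummy variables in the conjunction step sound. The second is the saturated set $F^\ast$ with the two transfer claims. Read literally, the paper's existential step is incorrect. Your version costs two small extra lemmas, both of the same kind as the appendix proofs: one extends a coded run by a standard number $j \le |Q|$ of steps, the other restricts a run from length $D$ to length $d$. In return, the existential step actually goes through.

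Three small adjustments:
\begin{itemize}
\item In the second claim, take $D := \max(d, g(\vv x, y))$ rather than assuming $g(\vv x, y) \geq d$.
\item Prove that the state at position $d$ lies in $F^\ast$ by \cref{induction_scheme} over positions between $d$ and $D$, since this gap may be nonstandard. Use that all digits of $\vv x$ at positions $\geq d$ are $0$, so the run stays inside the zero-reachable set of that state.
\item In the base case no separate check of the carry state of $A_+$ is needed. \cref{lemma_for_basic_automata} together with $x_i|_d = x_i$ for $d \geq g$ already gives the strengthened form directly.
\end{itemize}
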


\begin{proof}
    We may assume that all atomic formulas are simple (i.e., they contain no nested occurrences of function symbols) and that the only logical connectives are $\neg$, $\wedge$, and $\exists$.

    The proof proceeds by induction on $\varphi$. The base case of atomic formulas is handled in \cref{corollary_base_case}.

    Let $\varphi = \neg \psi$ and let $A$ be a DFA for $\psi$, which exists by the induction hypothesis. Then, by \cref{lemma_for_automaton_constructions}(v) $\overline{A}$ is a DFA for $\psi$. 

    Let $\varphi = (\psi_0 \wedge \psi_1)$. By the induction hypothesis, there exist DFAs $A_0$ and $A_1$ for $\psi_0$ and $\psi_1$, respectively. After possibly adding dummy variables and applying \cref{lemma_for_automaton_constructions}(iv), we may assume that $\varphi$, $\psi_0$, and $\psi_1$ share the same set of free variables and that $A_0$ and $A_1$ share the same alphabet (namely, $\Sigma_k^m$, where $\varphi$ has $m$ free variables). Hence, by \cref{lemma_for_automaton_constructions}(i), the product $A_0 \times A_1$ is a DFA for $\varphi$.


    Finally, let $\varphi = \exists y\ \psi$ and let $A$ be a DFA for $\psi$. If $y$ does not occur freely in $\psi$, then $A$ is already a DFA for $\varphi$. If $y$ occurs freely in $\psi$, then by \cref{lemma_for_automaton_constructions}(ii, iii'), $\det(\pr(A))$ is a DFA for $\varphi$.
\qed\end{proof}


The following proposition is proved in the standard manner, see \cite[Theorem 1.8]{hajek_pudlak_2017}.

\begin{proposition}\label[proposition]{prop_Delta_0_completeness}
    If $\varphi$ is a $\Delta_0$-sentence and $\N \vDash \varphi$, then $\T_k \vdash \varphi$.
\end{proposition}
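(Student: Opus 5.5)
The plan is the standard argument that true $\Delta_0$-sentences are provable in any theory that decides closed atomic facts and in which bounded quantifiers over numerals reduce to finite disjunctions or conjunctions. I will prove, by induction on the complexity of a $\Delta_0$-sentence $\varphi$ (with negations pushed to atoms, or treating $\varphi$ and $\neg\varphi$ together), the stronger claim
\[
\N \vDash \varphi \ \Rightarrow\ \T_k \vdash \varphi, \qquad \N \vDash \neg\varphi \ \Rightarrow\ \T_k \vdash \neg\varphi.
\]

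\emph{Step 1: closed terms.} For every closed term $t$, show $\T_k \vdash t = \underline{n}$, where $n$ is its value in $\N$. This goes by induction on $t$.
\begin{itemize}
\item For $S$ it is immediate.
\item For $+$, prove $\underline m + \underline n = \underline{m+n}$ by induction on $n$ at the meta level, using the monoid axioms (Add) and the fact that $S(x) = x + \one$. If $S$ is not related to $+$ by (Add), one must read that off the discretely ordered monoid axioms.
\item For $V_k$, I need $V_k(\underline n) = \underline{k^j}$ with $k^j$ the largest power dividing $n$. Write $n = k^j m$ with $m \not\equiv 0 \pmod k$. Since $\underline m \not\equiv_k 0$ is witnessed explicitly via $\underline m = k\underline{q} + \underline a$ together with (Mod) and cancellation, \cref{lemma_for_coefs_in_expansion}(vi) gives $V_k(\underline m) = \one$. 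Then apply (V1) $j$ times: $V_k(k^j \underline m) = k^j V_k(\underline m)$. The case $n=0$ is (V0).
\end{itemize}

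\emph{Step 2: atomic sentences.} It remains to decide $\underline m = \underline n$ and $\underline m \le \underline n$. The true ones are direct: for $\le$ use (Ord1) with witness $\underline{n-m}$. For the false ones, show $\underline m \ne \underline n$ for $m<n$: from $\underline m = \underline m + \underline{n-m}$, cancellation gives $\underline{n-m}=0$, contradicting $S(x)\neq 0$, which follows from discreteness of the order. Then $\neg(\underline n \le \underline m)$ follows from antisymmetry and $\underline m \le \underline n$.

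\emph{Step 3: bounded quantifiers.} The key lemma is
\[
\T_k \vdash \forall x\,\big(x \le \underline n \leftrightarrow \textstyle\bigvee_{i\le n} x = \underline i\big).
\]
I prove it by meta-induction on $n$, using the discrete order axiom $x \le S(y) \to x \le y \vee x = S(y)$ from (Add), together with (Ord0) for the base case. Given this, for $\exists x \le t\, \psi$ I first replace $t$ by its numeral using Step 1. The sentence is then provably equivalent to $\bigvee_{i\le n}\psi(\underline i)$, and dually for $\forall$. The induction hypothesis applied to the instances $\psi(\underline i)$, which are simpler sentences, finishes the argument; the connective cases are trivial.

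The main obstacle I expect is the bookkeeping in Steps 1 and 3: making sure "axioms of discretely ordered commutative cancellative monoids" really supply the successor/discreteness facts $S(x)=x+\one$ and $x< \one \to x=0$ (equivalently, that no element lies strictly between $x$ and $S(x)$). The $V_k$ evaluation is the only point specific to this theory, and \cref{lemma_for_coefs_in_expansion}(vi) with (V1) handles it.
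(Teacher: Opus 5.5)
Your proposal is correct and is the standard $\Delta_0$-completeness argument that the paper invokes by citing H\'ajek--Pudl\'ak (evaluate closed terms to numerals, decide closed atoms, unfold bounded quantifiers with numeral bounds into finite disjunctions and conjunctions); your computation of $V_k(\underline n)$ via \cref{lemma_for_coefs_in_expansion}(vi) and (V1) supplies the one theory-specific step the paper leaves implicit. One small slip: derive true atoms $\underline m \le \underline n$ from $x \le x + y$ (by (Ord0) and monotonicity of $+$) rather than from (Ord1), because using (Ord1) right-to-left needs the bound $\underline{n-m} \le \underline n$, which is itself an atom of the same kind and makes the argument circular.
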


\emph{Proof of \cref{main_theorem}.} Let $\varphi$ be a sentence, such that $\N \vDash \varphi$. By \cref{theorem_Buchi_formalized}, there is $\Delta_0$-sentence $\theta$ such that $\T_k \vDash \varphi \leftrightarrow \theta$ and, by \cref{prop_Delta_0_completeness}, $\T_k \vdash \theta$. Hence, $\T_k \vdash \varphi$. \qed

\begin{remark}\label[remark]{remark_on_induction}
    It is clear from the proof that the only essential use of the comprehension schema is to derive the least element schema. Hence, $\BA_k$ can alternatively be axiomatized using the induction scheme from \cref{induction_scheme} together with the axioms (Add), (Ord0), (Ord1), (Mod), (V0), (V1), (V2), (V3), (V4), and (V5).
\end{remark}

However, it remains unknown whether $\BA_k$ is finitely axiomatizable. We leave the following questions open.

\begin{question}
    \begin{enumerate}[(1)]
        \item Is $\BA_k$ finitely axiomatizable?
        \item Is $\BA_k$ finitely axiomatizable over Presburger Arithmetic?

        \item Is it possible to replace the induction scheme for $\Delta_0$-formulas with a quantifier-free induction scheme in the axiomatization of $\BA_k$?
        \item Does $\BA_k$ admit a \emph{finite} expansion by definable functions or predicates that admits  quantifier elimination?
    \end{enumerate}
\end{question}


\begin{subappendices}
\renewcommand{\thesection}{\Alph{section}}%

\section{Proof of \cref{lemma_for_automaton_constructions}(iii)}\label{appendix_1}

{\bf \cref{lemma_for_automaton_constructions}(iii). } {\it
    Let $A = (\Sigma_k^m, Q, q_0, F, \Delta)$ and $A' = (\Sigma_k^m, Q', q_0', F', \Delta')$ be finite automata. Then the following is provable in $\T_k$:
    \begin{enumerate}[start=3,label={\rm (\roman*)}]
        \item $\forall \vv x \ \forallP d \ \bigwedge_{q \in Q} \Big(W_{\pr(A), q}(\vv x, d) \leftrightarrow \exists y < d \ W_{A, q}(\vv x, y, d) \Big)$.
    \end{enumerate}
}

\begin{proof}
    
    We argue in $\T_k$. Fix $\vv x$ and argue by induction on $d$:
    $$\bigwedge_{q \in Q} \Big(W_{\pr(A), q}(\vv x, d) \leftrightarrow \exists y < d \ W_{A, q}(\vv x, y, d) \Big).$$
    
    Consider the base case of $d = \one$. If $W_{\pr(A), q}(\vv x, d)$ holds, then by definition of $W_{\pr(A),q}$ we must have $q = q_0$. Hence we have $W_{A, q}(\vv x, 0, d)$. The backward implication is immediate from the definitions.

    For the inductive step, suppose 
    $$\bigwedge_{q \in Q} \Big(W_{\pr(A), q}(\vv x, d) \leftrightarrow \exists y < d \ W_{A, q}(\vv x, y, d) \Big).$$
    Denote by $\vv a$ the tuple $\Big( (x_i)_d \Big)_{i = 1,  \dots, |\vv x|}$. If $W_{\pr(A), q}(\vv x, kd)$ holds, then the definition of $W_{\pr(A), q}$ implies that $W_{\pr(A), q'}(\vv x, d)$ for some $q' \in Q$ such that $(q', \vv a, q)$ is a transition of $\pr(A)$. By the induction hypothesis there exists $y < d$ such that $W_{A, q}(\vv x, y, d)$. The definition of $\pr(A)$ implies that there is $b \in \{0, \dots, k - 1\}$ such that $(q', (\vv a, b), q)$ is a transition of $A$. Now it is easy to see that $W_{A, q}(\vv x, y + b d, kd)$ holds. 
    
    The backward implication is immediate, since if $(w_q)_{q \in Q}$ are witnesses for $W_{A, q}(\vv x, y, d)$, then the same $(w_q)_{q \in Q}$ witness for $W_{\pr(A), q}(\vv x, d)$.
\qed\end{proof}

\section{Proof of \cref{lemma_for_basic_automata}(iii)}
\label{appendix_2}

{\bf \cref{lemma_for_basic_automata}(iii). } {\it
    The following is provable in $\T_k$:
    \begin{enumerate}[start=3,label={\rm (\roman*)}]

        \item $ \forall  x \ \forall y \ \forall z \ \forallP d \ \Big ( \big(x|_d + y|_d = z|_d \leftrightarrow W_{A_+, q_0}(x, y, z, d) \big) \wedge \big(x|_d + y|_d = z|_d + d \leftrightarrow W_{A_+, q_1}(x, y, d) \big) \Big)$.


    \end{enumerate}

}

\begin{proof}
    As before, we argue in $\T_k$. Fix $x, y, z$. The base case of $d = \one$ is trivial, since $x|_{\one} = y|_{\one} = z|_{\one} = 0$ and by definition $W_{A_+, q_0}(x, y, z, \one)$ holds. At the same time, $W_{A_+, q_1}(x, y, z, \one)$ does not hold. For the inductive step, suppose that 
    $$\big(x|_d + y|_d = z|_d \leftrightarrow W_{A_+, q_0}(x, y, z, d) \big) \wedge  \big(x|_d + y|_d = z|_d + d \leftrightarrow W_{A_+, q_1}(x, y, d) \big).$$

    We first show that $x|_{kd} + y|_{kd} = z|_{kd} \leftrightarrow W_{A_+, q_0}(x, y, z, kd)$.
    Suppose $x|_{kd} + y|_{kd} = z|_{kd}$. We need to prove that $W_{A_+, q_0}(x, y, z, kd)$ holds. There exist unique $a, b, c \in \{0, \dots, k - 1\}$ such that 
    $$x|_{kd} = x|_d + a d, \quad y|_{kd} = y|_d + b d, \quad z|_{kd} = z|_d + c d,$$
    where $a = (x)_d$, $b = (y)_d$, and $c = (z)_d$. 
    Then we have
    $$z|_d + cd = z|_{kd} = x|_{kd} + y|_{kd} = x|_d + y|_d + (a + b)d.$$
    We claim that either $c = a + b$ or $c = a + b + 1$. Suppose, for a contradiction, that $c \leq a + b - 1$ or $c \geq a + b + 2$. In the case $c \leq a + b - 1$ we have 
    $$x|_d + y|_d + (a + b)d \geq (c + 1)d > z|_d + cd,$$
    a contradiction. In the case $c \geq a + b + 2$ we have
    $$z|_d + cd \geq (a + b + 2)d > x|_d + y|_d + (a + b)d,$$
    also a contradiction. Hence, $c = a + b$ or $c = a + b + 1$. In the first case, we have $x|_d + y|_d = z|_d $, and by the induction hypothesis, $ W_{A_+, q_0}(x, y, z, d)$ holds. Consequently, $W_{A_+, q_0}(x, y, z, kd)$ holds as well. The other forward implication is treated in a similar manner.

    Now assume that $W_{A_+, q_0}(x, y, z, kd)$,holds. We prove that $x|_{kd} + y|_{kd} = z|_{kd}$. Choose $a, b, c \in \{0, \dots, k - 1\}$ such that $x|_{kd} = x|_d + a d$, $y|_{kd} = y|_d + b d$ and $z|_{kd} = z|_d + c d$. $W_{A_+, q_0}(x, y, z, kd)$ implies that either $W_{A_+, q_0}(x, y, z, d)$ and $c = a + b$ or $W_{A_+, q_1}(x, y, z, d)$ and $c = a + b + 1$. In the first case, by the induction hypothesis we have $x|_d + y|_d = z|_d$. Therefore, $(x|_d + a d) + (y|_d + bd) = (z|_d + cd)$, which shows that $x|_{kd} + y|_{kd} = z|_{kd}$. The second case is treated in an analogous way.

    We now show that $x|_{kd} + y|_{kd} = z|_{kd} + kd \leftrightarrow W_{A_+, q_1}(x, y, kd)$.
    Assume first that $x|_{kd} + y|_{kd} = z|_{kd} + kd$. We prove that $W_{A_+, q_1}(x, y, z, kd)$ holds. Choose $a, b, c \in \{0, \dots, k - 1\}$ such that $x|_{kd} = x|_d + a d$, $y|_{kd} = y|_d + b d$ and $z|_{kd} = z|_d + c d$. Then we have 
    $$z|_d + (c + k)d = z|_{kd} + k d = x|_{kd} + y|_{kd} = x|_d + y|_d + (a + b)d.$$
    We claim that either $c = a + b - k$ or $c = a + b + 1 - k$. Suppose, towards a contradiction, that $c \leq a + b - 1 - k$ or $c \geq a + b + 2 - k$. If $c \leq a + b - 1 - k$, then 
    $$x|_d + y|_d + (a + b)d \geq (c + 1 + k)d > z|_d + (c + k)d,$$
    which contradicts the above equality. If $c \geq a + b + 2 - k$, then
    $$z|_d + (c + k)d \geq (a + b + 2)d > x|_d + y|_d + (a + b)d,$$
    again a contradiction. Hence, $c = a + b - k$ or $c = a + b + 1 - k$. In the first case, we have $x|_d + y|_d = z|_d $ and by the induction hypothesis $W_{A_+, q_0}(x, y, z, d)$ holds. Therefore, $W_{A+, q_1}(x, y, z, kd)$. The second case is treated analogously.

    The implication $W_{A_+, q_1}(x, y, z, kd) \to x|_{kd} + y|_{kd} = z|_{kd} + kd$ is proved in the same way as $W_{A_+, q_0}(x, y, z, kd) \to x|_{kd} + y|_{kd} = z|_{kd}$.
\qed\end{proof}
\end{subappendices}


\begin{credits}

\subsubsection{Acknowledgments.}
This work was supported by the Theoretical Physics and Mathematics Advancement Foundation <<BASIS>>.

\subsubsection{\discintname}
The author has no competing interests to declare that are
relevant to the content of this article.

\end{credits}
%
%
%
%

\end{document}